 \newcommand{\F}{\mathcal{F}}
 \newcommand{\E}{\mathbf{E}}
 \newcommand{\R}{{\bf R}}
\DeclareMathOperator*{\sgn}{sgn}
\newtheorem{thm}{Theorem}[section]
\newtheorem{lem}{Lemma}[section]
\newtheorem{df}{Definition}[section]
\newtheorem{rem}{Remark}[section]
\newtheorem{cor}{Corollary}[section]
\newtheorem{pro}{Proposition}[section]
\def\ps@pprintTitle{%
     \let\@oddhead\@empty
     \let\@evenhead\@empty
     \def\@oddfoot{\footnotesize\itshape
       Preprint submitted to \ifx\@journal\@empty
       \else\@journal\fi\hfill\quad \today}%
     \let\@evenfoot\@oddfoot}
\journal{ArXiv}
\begin{document}


\begin{frontmatter}

\title {{\bfseries  $L^p~(p>1)$ solutions of BSDEs with generators satisfying some non-uniform conditions in $t$ and $\omega$}\tnoteref{1}}

\tnotetext[1]{Supported by the National Natural Science Foundation of China (No. 11371362) and the Fundamental Research Funds for the Central Universities (No. 2012QNA36).}

\author{Yajun LIU}
\author{Depeng LI}
\author{Shengjun FAN\corref{cor1}}%
\ead{f\_s\_j@126.com}
\cortext[cor1]{Corresponding author}

\address{College of Sciences, China University of Mining and Technology, Xuzhou, Jiangsu, 221116, PR China}


\begin{abstract}
This paper is devoted to the $L^p$ ($p>1$) solutions of one-dimensional backward stochastic differential equations (BSDEs for short) with general time intervals and generators satisfying some non-uniform conditions in $t$ and $\omega$. An existence and uniqueness result, a comparison theorem and an existence result for the minimal solutions are respectively obtained, which considerably improve some known works. Some classical techniques used to deal with the existence and uniqueness of $L^p$ ($p>1$) solutions of BSDEs with Lipschitz or linear-growth generators are also developed in this paper.
\end{abstract}

\begin{keyword}
Backward stochastic differential equation\sep Existence and uniqueness\sep Comparison theorem\sep Minimal solution\sep Non-uniform condition in $(t,\omega)$

\MSC[2010]  60H10
\end{keyword}
\end{frontmatter}


\section {Introduction}
\numberwithin{equation}{section}
Let us fix a extended real number $0\leq T\leq +\infty$, which can be finite or infinite. Let $(\Omega,\F,P)$ be a probability space carrying a standard $d$-dimensional Brownian motion $(B_t)_{t\geq0}$ and $(\F_t)_{t\geq0}$ be the natural $\sigma$-algebra generated by $(B_t)_{t\geq0}$. We assume that $\F_T=\F$ and $(\F_t)_{t\geq 0}$ is right-continuous and complete. In this paper, we are concerned with the following one-dimensional backward stochastic differential equation (BSDE for short in the remaining):
 \begin{flalign}\label{bsde}
    y_t=\xi+\int_t^T g(s,y_s,z_s){\rm d}s-\int_t^T z_s\cdot{\rm d}B_s,~t\in[0,T],
\end{flalign}
where the extended real number~$T$ is called the terminal time, $\xi$ is a one-dimensional $\mathcal{F}_T$-measurable random variable called the terminal condition, the random function $g(\omega,t,y,z):\Omega\times [0,T]\times \R\times \R^d\mapsto\R$ is $(\mathcal{F}_t)$-progressively measurable for each $(y,z)$ called the generator of BSDE (\ref{bsde}). The solution $(y_t,z_t)_{t\in [0,T]}$ is a pair of $(\mathcal{F}_t)$-progressively measurable processes and the triple $(\xi,T,g)$ is called the parameters of BSDE (\ref{bsde}). BSDE with the parameters $(\xi,T,g)$ is usually denoted by BSDE $(\xi,T,g)$.

The nonlinear BSDEs were initially introduced by \citet{Pardoux90)}. They proved an existence and uniqueness result for $L^2$ solutions of multidimensional BSDEs. In their work, the assumptions of generator $g$ is Lipschitz continuous with respect to $(y,z)$ uniformly in $(t,\omega)$, and the terminal time $T$ is finite, the terminal condition $\xi$ and the process $\{g(t,0,0)\}_{t\in [0,T]}$ are square integrable. From then on, BSDEs have been extensively studied and many applications have been found in mathematical finance, stochastic control, partial differential equations and so on (see \citet{El Karoui97} and \citet{Mor09} for details). On the other hand, many papers have been devoted to relaxing the uniform Lipschitz condition on the generator $g$, improving the finite terminal time into the infinite case and studying the solutions under non-square integrable parameters.

Many works including \citet{Mao95}, \citet{Lepeltier97}, \citet{Bahlali01}, \citet{Briand03}, \citet{Hamadene03}, \citet{Briand07}, \citet{Briand08}, \citet{Wang09}, \citet{Chen10}, \citet{Delb11}, \citet{Ma13}, \citet{Hu2015} and \citet{Fan16}, see also the references therein, weakened the uniform Lipschitz condition on the generator $g$, and some of them investigated the $L^p~(p>1)$ solution of BSDE (\ref{bsde}). \citet{Chen00} first improved the result of \citet{Pardoux90)} to the infinite time interval case and proved an existence and uniqueness result for the $L^2$ solution of BSDE (\ref{bsde}) where the generator $g$ is Lipschitz continuous in $(y,z)$ non-uniformly with respect to $t$. Furthermore, \citet{Fan and Jiang10} and \citet{Fan Jiang and Tian11} relaxed the Lipschitz condition of \citet{Chen00} and obtained two existence and uniqueness results for the $L^2$ solution of BSDE (\ref{bsde}) with finite and infinite time intervals, which also generalizes the results of \citet{Mao95} and \citet{Lepeltier97} respectively.

We especially mention that \citet{El Karoui and Huang97} first introduced a stochastic Lipschitz condition of the generator $g$ in $(y,z)$, where the Lipschitz constant depends also on $(t,\omega)$. They investigated a general time interval BSDE driven by a general c\`{a}dl\`{a}g martingale, and some stronger integrability conditions on the generator and terminal condition as well as on the solutions make it possible to replace the uniform Lipschitz condition by a stochastic one. In this spirit, \citet{Bender00} and \citet{Wang07} respectively proved an existence and uniqueness result for the $L^2$ solution and $L^p~(p>1)$ solution of BSDE (\ref{bsde}) with a general time horizon. After that, \citet{Briand08} introduced another stochastic Lipschitz condition involving a bounded mean oscillation martingale and investigated the $L^p$ (for some certain $p>1$) solution of a infinite dimensional BSDE, where some new higher order integrability conditions on the generator and terminal condition (see their assumptions A3 and A4 for details) need to be satisfied.

Motivated by these results, in this paper, we first put forward a new stochastic Lipschitz condition (see (H1) in Section 3) and prove an existence and uniqueness result of the $L^p~(p>1)$ solution of BSDE (\ref{bsde}) with a finite and infinite time interval (see Theorem \ref{existence and uniqueness}). We do not impose any stronger integrability conditions to the parameters $(\xi,g)$ and the solution $(y,z)$ as made in \citet{El Karoui and Huang97}, \citet{Bender00} and \citet{Wang07}, and the integrability condition (\ref{beasic assumption}) is the only requirement in (H1). By introducing an example, we also show that our stochastic Lipschitz condition is strictly weaker than the Lipschitz condition non-uniformly in $t$ used in \citet{Chen00} (see Example 3.1). And by using stopping times to subdivide the interval $[0,T]$, we successfully overcome a new difficulty arisen naturally in our framework, see the proof of Theorem \ref{existence and uniqueness}. Furthermore, in Section 4, by developing a method employed in \citet{Fan Jiang and Tian11} and \citet{Ma13} we establish a general comparison theorem for the $L^p~(p>1)$ solutions of BSDEs when one of generators satisfies a monotonicity condition in $y$ and a uniform continuity condition in $z$, which are both non-uniform in $(t,\omega)$ (see Theorem \ref{general comparison theorem}). Finally, in Section 5, we prove an existence result of the minimal $L^p~(p>1)$ solution for BSDE (\ref{bsde}) when the generator $g$ is continuous and has a linear growth in $(y,z)$ non-uniform in $(t,\omega)$ (see Theorem \ref{minimal solution}), by improving the method used in \citet{Izumi13} to prove in a direct way that the sequence of solutions of the BSDEs approximated by Lipschitz generators is a Cauchy sequence in $S^p\times M^p$. And, based on Theorem \ref{minimal solution} together with Theorem \ref{general comparison theorem}, we will also give a new comparison theorem of the minimal $L^p~(p>1)$ solutions of BSDEs (see Theorem \ref{theorem 5.2}), and a general existence and uniqueness theorem of $L^p~(p>1)$ solutions of BSDEs (see Theorem \ref{theorem 5.3}).

We would like to mention that our results considerably improve some known works including those obtained in \citet{Pardoux90)}, \citet{Chen00}, \citet{Briand07}, \citet{Chen10} and \citet{Fan Jiang and Tian11} etc. And, some classical techniques used to deal with the existence and uniqueness of $L^p$ ($p>1$) solutions of BSDEs with Lipschitz or linear-growth generators are also developed in this paper.

\section{Notations and lemmas}

In this section, we introduce some basic notations and definitions, which will be used in this paper.  First, we use $|\cdot|$ to denote the norm of Euclidean space $\R^d$. For each subset $A\subset\Omega\times[0,T]$, let $\mathbbm{1}_A=1$ in case of $(\omega,t)\in A$, otherwise, let $\mathbbm{1}_A=0$. For each real number $p>1$,~let $L^p(\Omega,\F_T,P;\R)$ be the set of all $\R$-valued and $\F_T$-measurable random variables $\xi$ such that ${\bf E}[|\xi|^p]<+\infty$, and $S^p(0,T;\R)$ (or $S^p$ simply) denote the set of $\R$-valued, adapted and continuous processes $(Y_t)_{t\in[0,T]}$ such that
\begin{equation*}
     \|Y\|_{S^p}:=\left(\E\left[\sup\limits_{t\in[0,T]}|Y_t|^p\right]\right)^\frac{1}{p}<+\infty.
\end{equation*}
In the whole paper, let $M^p(0,T;\R^d)$ (or $M^p$ simply) denote the set of $(\F_t)$-progressively measurable $\R^d$-valued processes $(Z_t)_{t\in[0,T]}$ such that
\begin{equation*}
     \|Z\|_{M^p}:=\left(\E\left[\left(\int_0^T |Z_t|^2{\rm d}t\right)^{\frac{p}{2}}\right]\right)^{\frac{1}{p}}<+\infty.
\end{equation*}
Obviously, both $S^p$ and $M^p$ are Banach spaces for each $p>1$.

Finally, let {\bf{S}} be the set of all nondecreasing continuous functions $\phi(\cdot)$: $\R^+\mapsto\R^+$ with $\phi(0)=0$ and $\phi(x)>0$ for all $x\in \R^+$, here and hereafter $\R^+:=[0,+\infty).$

\begin{df}
A pair of processes $(y_t,z_t)_{t\in [0,T]}$ taking values in $\R\times {\R}^d$ is called a $L^p$ solution of BSDE (\ref{bsde}) for some $p>1$, if $(y_t,z_t)_{t\in [0,T]}\in S^p(0,T;\R)\times M^p(0,T;\R^d)$ and ${\rm d}P-a.s.$, BSDE (\ref{bsde}) holds true for each $t\in[0,T]$.
\end{df}

Let us introduce the following Lemma \ref{lem}, which will be used in Section 3 and Section 5.

\begin{lem}\label{lem}
Let $p>1$, $0\leq T\leq +\infty$, and $(g_t)_{t\in [0,T]}$ is a $(\mathcal{F}_t)$-progressively measurable process such that $\int_0^Tg_t{\rm d}t<+\infty,~{\rm d}P-a.s.$. If $(Y_t,Z_t)_{t\in [0,T]}$ is a $L^p$ solution to the following BSDE:
\begin{flalign}\label{bsde1}
Y_t=Y_T+\int_t^Tg_s{\rm d}s-\int_t^TZ_s\cdot {\rm d}B_s,~~t\in [0,T],
\end{flalign}
then there exists a positive constant $C_p$ depending only on $p$ such that for each $t\in [0,T]$,
\begin{flalign}\label{lem ineq1}
\mathbb{E}\left[\sup\limits_{s\in[t,T]}|Y_s|^p\right]\leq C_p\mathbb{E}\left[|Y_T|^p+\int^T_t\left(|Y_s|^{p-1}|g_s|\right){\rm d}s\right],
\end{flalign}
\begin{flalign}\label{lem ineq2}
\mathbb{E}\left[\left(\int_t^T |Z_s|^2{\rm d}s\right)^{\frac{p}{2}}\right]\leq C_p\left\{\mathbb{E}\left[|Y_T|^p+\left(\int^T_t\left(|Y_s||g_s|\right){\rm d}s\right)^{\frac{p}{2}}\right]+\mathbb{E}\left[\sup\limits_{s\in[t,T]}|Y_s|^p\right]\right\}.
\end{flalign}
Moreover, there exists a positive constant $\bar{C}_p$ depending only on $p$ such that for each $t\in [0,T]$,
\begin{flalign}\label{lem ineq3}
\mathbb{E}\left[\sup\limits_{s\in[t,T]}|Y_s|^p\right]+\mathbb{E}\left[\left(\int_t^T |Z_s|^2{\rm d}s\right)^{\frac{p}{2}}\right]\leq \bar{C}_p\mathbb{E}\left[|Y_T|^p+\left(\int^T_t|g_s|{\rm d}s\right)^p\right]
\end{flalign}
\end{lem}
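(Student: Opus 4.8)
The plan is to prove the three estimates \eqref{lem ineq1}--\eqref{lem ineq3} by the classical combination of It\^o's formula applied to $|Y_t|^p$ (or a smoothed version thereof), the Burkholder--Davis--Gundy (BDG) inequality, and Young's inequality, following the standard route of \citet{Briand03} but taking care that the constants depend only on $p$ and that everything works on a possibly infinite horizon. First I would fix $t\in[0,T]$ and, to avoid integrability issues at this stage, localize by a sequence of stopping times $\tau_n\uparrow T$; all the inequalities will be derived on $[t,\tau_n]$ and then recovered in the limit by Fatou's lemma and monotone/dominated convergence, using the a priori knowledge that $(Y,Z)\in S^p\times M^p$. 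Since the case $p\ge 2$ and the case $1<p<2$ differ slightly (in the latter $x\mapsto |x|^p$ is not $C^2$), I would either treat $u_\varepsilon(x)=(|x|^2+\varepsilon^2)^{p/2}$ and let $\varepsilon\downarrow 0$, or simply quote It\^o--Tanaka; the key point is the resulting inequality
\begin{flalign*}
|Y_t|^p+c(p)\int_t^T |Y_s|^{p-2}\mathbbm{1}_{\{Y_s\neq 0\}}|Z_s|^2\,{\rm d}s\leq |Y_T|^p+p\int_t^T |Y_s|^{p-1}|g_s|\,{\rm d}s-p\int_t^T |Y_s|^{p-1}\widehat{Y}_s\cdot Z_s\,{\rm d}B_s,
\end{flalign*}
where $\widehat{Y}_s=Y_s/|Y_s|$ on $\{Y_s\neq0\}$ and $c(p)=p(p-1)/2>0$ (for $p\ge 2$; for $1<p<2$ one keeps a nonnegative term and a constant that still depends only on $p$).

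For \eqref{lem ineq1}: take $\sup_{s\in[t,T]}$ and then expectations in the displayed inequality; the stochastic integral term is handled by BDG, which bounds $\E[\sup_s |\int_t^s |Y_r|^{p-1}\widehat Y_r\cdot Z_r\,{\rm d}B_r|]$ by a constant times $\E[(\int_t^T |Y_s|^{2p-2}|Z_s|^2\,{\rm d}s)^{1/2}]$. I would dominate the integrand $|Y_s|^{2p-2}|Z_s|^2\le \sup_s|Y_s|^p\cdot |Y_s|^{p-2}|Z_s|^2$, pull $\sup_s |Y_s|^{p/2}$ out of the square root, and apply Young's inequality $ab\le \frac14 a^2+b^2$ to absorb $\frac14\E[\sup_s|Y_s|^p]$ into the left-hand side; the leftover term is a constant times $\E[\int_t^T |Y_s|^{p-2}|Z_s|^2\,{\rm d}s]$, which is controlled by the It\^o term already on the left of the displayed inequality. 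Rearranging gives \eqref{lem ineq1} with a constant depending only on $p$. For \eqref{lem ineq2}: go back to the displayed It\^o inequality, move the $Z$-term to the left and everything else to the right, take expectations, and bound the martingale term after taking $\sup_s$; this time Young's inequality is used in the form that produces $\E[\sup_s|Y_s|^p]$ (now an admissible term on the right of \eqref{lem ineq2}) plus a constant times $\E[\int_t^T|Y_s|^{p-2}|Z_s|^2\,{\rm d}s]$, and the latter is reabsorbed into the $Z$-term on the left. The $\int |Y_s||g_s|$ contribution is kept as $(\int_t^T |Y_s||g_s|\,{\rm d}s)^{p/2}$ after an elementary estimate $|Y_s|^{p-1}|g_s|\le \sup_s|Y_s|^{p/2}\cdot |Y_s|^{p/2-1}|g_s|$ combined with Young; I would absorb the $\sup$ part and keep the rest, matching the stated form. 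Finally, \eqref{lem ineq3} follows by combining \eqref{lem ineq1} and \eqref{lem ineq2} and then applying Young's inequality once more to the cross terms: in \eqref{lem ineq1}, $\E[\int_t^T |Y_s|^{p-1}|g_s|\,{\rm d}s]\le \E[\sup_s|Y_s|^{p-1}\int_t^T|g_s|\,{\rm d}s]\le \frac{1}{2C_p}\E[\sup_s|Y_s|^p]+C_p'\,\E[(\int_t^T|g_s|\,{\rm d}s)^p]$ by Young with exponents $p/(p-1)$ and $p$, so the $\sup$ term is absorbed; similarly in \eqref{lem ineq2} one bounds $(\int_t^T|Y_s||g_s|\,{\rm d}s)^{p/2}\le \sup_s|Y_s|^{p/2}(\int_t^T|g_s|\,{\rm d}s)^{p/2}$ and uses Young with exponent $2$.

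The main obstacle I anticipate is not any single estimate but the bookkeeping of constants and the non-smoothness of $|x|^p$ for $1<p<2$: one must make sure that every application of It\^o/Tanaka, BDG and Young produces a constant that is a function of $p$ alone (independent of $T$, $\xi$, $g$, and the localizing index $n$), and that the absorption arguments are legitimate, i.e. that the terms being moved to the left are already known to be finite (this is where membership in $S^p\times M^p$ and the localization $\tau_n$ are essential, so that one never subtracts $\infty$ from $\infty$). A secondary technical point is the passage to the limit $n\to\infty$ on the infinite horizon $T=+\infty$: here I would use that $Y\in S^p$ gives $\sup_{s}|Y_s|^p$ integrable, hence $Y_{\tau_n}\to Y_T$ suitably, and Fatou on the left combined with monotone convergence of the integrals on the right yields the claimed inequalities on all of $[t,T]$. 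Everything else is routine.
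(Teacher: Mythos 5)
Your handling of \eqref{lem ineq1} and \eqref{lem ineq3} is sound, and for \eqref{lem ineq3} it coincides with what the paper actually writes out: the paper simply quotes Proposition 2.4 of Izumi (2013) for \eqref{lem ineq1}--\eqref{lem ineq2} and only proves the reduction of \eqref{lem ineq3} to them, via the two estimates $\int_t^T|Y_s|^{p-1}|g_s|\,{\rm d}s\le\sup_s|Y_s|^{p-1}\int_t^T|g_s|\,{\rm d}s$ and $\bigl(\int_t^T|Y_s||g_s|\,{\rm d}s\bigr)^{p/2}\le\sup_s|Y_s|^{p/2}\bigl(\int_t^T|g_s|\,{\rm d}s\bigr)^{p/2}$ followed by Young's inequality --- exactly your last step. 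Your decision to reprove \eqref{lem ineq1}--\eqref{lem ineq2} from scratch by It\^o/It\^o--Tanaka, BDG and localization is the standard Briand et al.\ (2003) route that underlies the cited proposition, so in substance the approaches agree; the only difference is self-containedness versus citation.

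There is, however, a genuine gap in your sketch of \eqref{lem ineq2}. Rearranging the It\^o inequality for $|Y|^p$ places $\int_t^T|Y_s|^{p-2}\mathbbm{1}_{\{Y_s\neq0\}}|Z_s|^2\,{\rm d}s$ on the left, and no absorption argument turns this into the quantity $\E\bigl[\bigl(\int_t^T|Z_s|^2\,{\rm d}s\bigr)^{p/2}\bigr]$ that \eqref{lem ineq2} controls: for $p\ge2$ one only has $|Y_s|^{p-2}\le\sup_r|Y_r|^{p-2}$, which bounds the It\^o term \emph{from above} by $\sup_r|Y_r|^{p-2}\int_t^T|Z_s|^2\,{\rm d}s$ (the wrong direction), and for $1<p<2$ the weight degenerates on $\{Y_s=0\}$. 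Likewise the $|Y|^p$-inequality produces the drift contribution $\int_t^T|Y_s|^{p-1}|g_s|\,{\rm d}s$, not the $\bigl(\int_t^T|Y_s||g_s|\,{\rm d}s\bigr)^{p/2}$ appearing in \eqref{lem ineq2}, and your proposed manipulation $|Y_s|^{p-1}|g_s|\le\sup_r|Y_r|^{p/2}\,|Y_s|^{p/2-1}|g_s|$ does not lead there. The correct (standard) argument for \eqref{lem ineq2} starts instead from It\^o applied to $|Y|^2$ on $[t,\tau_n]$, which gives $\int_t^{\tau_n}|Z_s|^2\,{\rm d}s\le\sup_{s}|Y_s|^2+2\int_t^{\tau_n}|Y_s||g_s|\,{\rm d}s+2\sup_u\bigl|\int_t^uY_sZ_s\cdot{\rm d}B_s\bigr|$; one then raises this to the power $p/2$, takes expectations, applies BDG to the martingale term to get $C\,\E\bigl[\bigl(\int_t^{\tau_n}|Y_s|^2|Z_s|^2\,{\rm d}s\bigr)^{p/4}\bigr]\le C\,\E\bigl[\sup_s|Y_s|^{p/2}\bigl(\int_t^{\tau_n}|Z_s|^2\,{\rm d}s\bigr)^{p/4}\bigr]$, and absorbs half of $\E\bigl[\bigl(\int|Z_s|^2\,{\rm d}s\bigr)^{p/2}\bigr]$ by Young. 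With that substitution your proof is complete; the localization and limit-passage issues you flag are real but are handled exactly as you describe.
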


\begin{proof}
In the same way as Proposition 2.4 in \citet{Izumi13}, we can prove (\ref{lem ineq1}) and (\ref{lem ineq2}). It remains to show (\ref{lem ineq3}). In fact, by basic inequality $2ab\leq a^2+b^2$ and Young's inequality we have, for each constant $\tilde{C}_p>0$,
\begin{flalign}\label{lemproof ineq1}
\tilde{C}_p\mathbb{E}\left[\int^T_t\left(|Y_s|^{p-1}|g_s|\right){\rm d}s\right]&\leq \tilde{C}_p\mathbb{E}\left[\sup\limits_{s\in[t,T]}|Y_s|^{p-1}\cdot\int^T_t|g_s|{\rm d}s\right] \nonumber\\
&\leq \frac{1}{2}\mathbb{E}\left[\sup\limits_{s\in[t,T]}|Y_s|^p\right]+\frac{1}{p}\left(\frac{2(p-1)}{p}\tilde{C}_p\right)^p\mathbb{E}\left[\left(\int_t^T|g_s|{\rm d}s\right)^p\right]
\end{flalign}
and
\begin{flalign}\label{lemproof ineq2}
\mathbb{E}\left[\left(\int^T_t\left(|Y_s||g_s|\right){\rm d}s\right)^{\frac{p}{2}}\right]&\leq \mathbb{E}\left[\sup\limits_{s\in[t,T]}|Y_s|^{\frac{p}{2}}\cdot\left(\int^T_t|g_s|{\rm d}s\right)^{\frac{p}{2}}\right]\nonumber\\
&\leq \frac{1}{2}\mathbb{E}\left[\sup\limits_{s\in[t,T]}|Y_s|^p\right]+\frac{1}{2}\mathbb{E}\left[\left(\int_t^T|g_s|{\rm d}s\right)^p\right].
\end{flalign}
Thus, (\ref{lem ineq3}) follows immediately from (\ref{lem ineq1}), (\ref{lem ineq2}), (\ref{lemproof ineq1}) and (\ref{lemproof ineq2}).
\end{proof}

The following technical Lemma \ref{linear growth sequence bounds} comes from Lemma 4 in \citet{Fan and Jiang11}, which will be used in Section 4. It gives a sequence of upper bounds for functions of linear growth.
\begin{lem}\label{linear growth sequence bounds}
Let $\Psi(\cdot):\R^+\mapsto\R^+$ be a nondecreasing function of linear growth, which means that $\Psi(x)\leq K(x+1)~(K>0)$ holds true for all $x\in \R^+$. Then for each $n\geq 1$,
\begin{flalign*}
\Psi(x)\leq (n+2K)x+\Psi\left(\frac{2K}{n+2K}\right)
\end{flalign*}
holds true for each $x\in \R^+$.
\end{lem}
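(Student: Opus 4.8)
The plan is to prove the inequality by a direct case analysis on $\R^+$, splitting at the threshold $a:=\frac{2K}{n+2K}$, which is precisely the argument appearing on the right-hand side of the asserted bound. Note first that since $n\geq 1$ and $K>0$ we have $a\in(0,1)$, and recall that the two hypotheses available are monotonicity of $\Psi$ and the linear growth bound $\Psi(x)\leq K(x+1)$; the proof will use exactly one of them in each case.

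For the first case, suppose $x\leq a$. Since $\Psi$ is nondecreasing, $\Psi(x)\leq\Psi(a)$, and because $(n+2K)x\geq 0$ the claimed inequality $\Psi(x)\leq (n+2K)x+\Psi\left(\frac{2K}{n+2K}\right)$ holds trivially.

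For the second case, suppose $x>a$. Here I would invoke the linear growth hypothesis to write $\Psi(x)\leq K(x+1)=Kx+K$, so it suffices to check that $Kx+K\leq (n+2K)x+\Psi(a)$, that is (since $\Psi(a)\geq 0$) that $K\leq (n+K)x$. But $x>a=\frac{2K}{n+2K}$ gives $(n+K)x>\frac{2K(n+K)}{n+2K}$, and the elementary arithmetic fact $2(n+K)\geq n+2K$ (equivalent to $n\geq 0$) yields $\frac{2K(n+K)}{n+2K}\geq K$; combining these gives $(n+K)x>K$, as needed.

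There is really no substantive obstacle here: the single idea is to recognise that the threshold must be taken to be $\frac{2K}{n+2K}$ — this is forced by the shape of the desired conclusion — after which the verification collapses to the trivial inequality $2(n+K)\geq n+2K$. The only points requiring a little care are that the two cases $x\leq a$ and $x>a$ jointly exhaust $\R^+$, and that monotonicity is used only in the first case while linear growth is used only in the second; neither causes any difficulty.
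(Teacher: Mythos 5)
Your proof is correct: the case split at $a=\tfrac{2K}{n+2K}$, using monotonicity for $x\leq a$ and the linear-growth bound plus the elementary inequality $2(n+K)\geq n+2K$ for $x>a$, is exactly the standard argument. The paper itself gives no proof (it cites Lemma~4 of Fan and Jiang (2011)), and your argument is the same one found there, so there is nothing to add.
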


\section{An existence and uniqueness result}

In this section, we will use a stopping time technique involved in subdividing the time interval $[0,T]$ to prove a general existence and uniqueness result for the $L^p~(p>1)$ solution of BSDE (\ref{bsde}), and introduce an example to show that our stochastic Lipschitz condition is strictly weaker than the Lipschitz condition non-uniformly in $t$ used in \citet{Chen00}. First, let us introduce the following assumptions with the generator $g$, where $0\leq T\leq +\infty$ and $p>1$.
\begin{description}
  \item(H1)~$g$ is Lipschtiz continuous in $(y,z)$ non-uniformly with respect to both $t$ and $\omega$ , i.e., there exist two $(\mathcal{F}_t)$-progressively measurable nonnegative processes $\{u_t(\omega)\}_{t\in [0,T]}$ and $\{v_t(\omega)\}_{t\in [0,T]}$ satisfying
       \begin{flalign}\label{beasic assumption}
        \int_0^T\left[u_t(\omega)+v_t^2(\omega)\right]{\rm d}t\leq M,~~{\rm d}P-a.s.
       \end{flalign}
       for some constant $M>0$ such that ${\rm d}P\times {\rm d}t -a.e.$, for each $y_1,y_2\in \R$, $z_1,z_2\in \R^d$,
      \begin{flalign*}
      \left|g(\omega,t,y_1,z_1)-g(\omega,t,y_2,z_2)\right|\leq u_t(\omega)|y_1-y_2|+v_t(\omega)|z_1-z_2|;
      \end{flalign*}
  \item(H2)~$\mathbb{E}\left[\left(\int_0^T|g(\omega,t,0,0)|{\rm d}t\right)^p\right]<+\infty.$
\end{description}
\vspace{0.2cm}
\begin{rem}
It is worth noting that the above (\ref{beasic assumption}) is equivalent to $\left\|\int_0^Tu_t(\omega)+v_t^2(\omega){\rm d}t\right\|_\infty \leq M $. For the sake of convenience, the $\omega$ in $u_t(\omega)$ and $v_t(\omega)$ is usually omitted without confusion.
\end{rem}

\vspace{0.2cm}
The following Theorem \ref{existence and uniqueness} shows an existence and uniqueness result for $L^p(p>1)$ solutions of BSDEs under assumptions (H1) and (H2), which could be seen as a generalization of the results obtained in \citet{Pardoux90)} and \citet{Chen00}, where the $u_t$ and $v_t$ in (H1) do not depend on $\omega$.

\begin{thm}\label{existence and uniqueness}
Assume that $p>1$, $0\leq T\leq +\infty$ and the generator $g$ satisfies assumptions (H1) and (H2). Then for each $\xi\in L^p(\Omega,\mathcal{F}_T,P;\bf R)$, BSDE $(\xi,T,g)$ admits a unique $L^p$ solution.
\end{thm}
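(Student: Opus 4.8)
The plan is to follow the classical strategy for Lipschitz BSDEs, but to exploit the integrability condition \eqref{beasic assumption} via a \emph{stopping-time subdivision} of $[0,T]$, which is what makes the argument go through without the stronger integrability assumptions imposed in \citet{El Karoui and Huang97}, \citet{Bender00} and \citet{Wang07}. First I would establish existence and uniqueness on a ``small'' random time interval on which the total mass $\int u_t+v_t^2\,{\rm d}t$ is controlled. Concretely, for a threshold $\delta>0$ to be chosen (depending only on $p$) define the stopping times $\tau_0=0$ and
\begin{flalign*}
\tau_{k+1}=\inf\Bigl\{t\ge \tau_k:\ \int_{\tau_k}^{t}\bigl(u_s+v_s^2\bigr)\,{\rm d}s\ge \delta\Bigr\}\wedge T.
\end{flalign*}
By \eqref{beasic assumption} we have $\tau_N=T$ for $N:=\lceil M/\delta\rceil$, ${\rm d}P$-a.s., so the interval $[0,T]$ is cut into finitely many pieces, uniformly in $\omega$. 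On each piece $[\tau_k,\tau_{k+1}]$ I would set up the usual contraction: given $(y',z')\in S^p\times M^p$ on that interval, solve the BSDE with generator $g(s,y'_s,z'_s)$ (a pure ``data'' BSDE, handled by the martingale representation theorem together with the a priori estimates of Lemma \ref{lem}), obtaining a map $\Phi$. Applying \eqref{lem ineq3} of Lemma \ref{lem} to the difference of two images, with $g_s$ replaced by $u_s|\Delta y'_s|+v_s|\Delta z'_s|$ and using Cauchy–Schwarz as
\begin{flalign*}
\int_{\tau_k}^{\tau_{k+1}}\!\bigl(u_s|\Delta y'_s|+v_s|\Delta z'_s|\bigr)\,{\rm d}s
\le \Bigl(\!\int_{\tau_k}^{\tau_{k+1}}\!u_s\,{\rm d}s\Bigr)\sup_{[\tau_k,\tau_{k+1}]}|\Delta y'|
+\Bigl(\!\int_{\tau_k}^{\tau_{k+1}}\!v_s^2\,{\rm d}s\Bigr)^{\!1/2}\Bigl(\!\int_{\tau_k}^{\tau_{k+1}}\!|\Delta z'_s|^2\,{\rm d}s\Bigr)^{\!1/2},
\end{flalign*}
each of the two mass factors is $\le\delta$ on the interval, so $\Phi$ becomes a contraction on $S^p([\tau_k,\tau_{k+1}])\times M^p([\tau_k,\tau_{k+1}])$ once $\delta$ is small enough (the required smallness involves only $\bar C_p$ and $p$, not $\omega$). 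This gives a unique $L^p$ solution on $[\tau_k,\tau_{k+1}]$ with any prescribed terminal value at $\tau_{k+1}$.

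Next I would patch the pieces together by backward induction on $k=N-1,N-2,\dots,0$: start from the given $\xi$ on $[\tau_{N-1},\tau_N=T]$, obtain $(y,z)$ there, then use $y_{\tau_{N-1}}$ as terminal condition on $[\tau_{N-2},\tau_{N-1}]$, and so on. One must check that the concatenated pair lies in the \emph{global} $S^p\times M^p$, i.e. that $\E[\sup_{[0,T]}|y_t|^p]<\infty$ and $\E[(\int_0^T|z_t|^2{\rm d}t)^{p/2}]<\infty$; this follows by a second application of Lemma \ref{lem}/\eqref{lem ineq3} on each block together with the bound $|g(s,y_s,z_s)|\le |g(s,0,0)|+u_s|y_s|+v_s|z_s|$, absorbing the $u_s,v_s$ terms using $\int u_s+v_s^2\,{\rm d}s\le\delta$ block-by-block (again uniformly in $\omega$) and summing the finitely many blocks; the input $\E[(\int_0^T|g(s,0,0)|\,{\rm d}s)^p]<\infty$ is (H2). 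A standard gluing argument shows the concatenation actually solves BSDE $(\xi,T,g)$ on all of $[0,T]$, and uniqueness is inherited from the piecewise uniqueness (a global $L^p$ solution restricted to $[\tau_k,\tau_{k+1}]$ with its own value at $\tau_{k+1}$ is the unique local solution there).

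The main obstacle, and the point the introduction flags as ``a new difficulty arisen naturally in our framework,'' is that the $\tau_k$ are genuinely random, so the number of blocks, while a.s. finite and bounded by $N$, is attached to a filtration-adapted partition rather than a deterministic one; one has to be careful that the a priori estimates of Lemma \ref{lem} and the contraction are applied on stochastic intervals $[\tau_k,\tau_{k+1}]$ and that the $S^p,M^p$ norms over these intervals behave well under concatenation — in particular that $\sup_{[0,T]}|y_t|^p$ is dominated by $\sum_{k=0}^{N-1}\sup_{[\tau_k,\tau_{k+1}]}|y_t|^p$ with a clean constant. A convenient device is to run the fixed-point argument on $[\tau_k,T]$ at once (with weighted norms, or simply block-by-block as above) so that all estimates are conditional on $\mathcal F_{\tau_k}$ and the optional-stopping/conditioning steps are transparent. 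Once the block count is pinned down by \eqref{beasic assumption} uniformly in $\omega$, the remaining work is the routine Lipschitz-BSDE bookkeeping, now powered by Lemma \ref{lem} instead of the Burkholder–Davis–Gundy estimates one would write by hand.
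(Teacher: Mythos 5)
Your proposal is correct and follows essentially the same route as the paper: the identical stopping-time subdivision of $[0,T]$ driven by the accumulated mass $\int(u_s+v_s^2)\,{\rm d}s$ and the bound $M$, the same contraction map built from the martingale representation theorem, the same use of inequality \eqref{lem ineq3} of Lemma \ref{lem} to get the contraction constant on each random block, and the same backward patching from $[T_{N-1},T]$ down to $[0,T_1]$. The only cosmetic difference is that you track the mass increment from $\tau_k$ rather than the cumulative integral from $0$, which yields the same partition.
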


\begin{proof}
Assume that $(y_t,z_t)_{t\in[0,T]}\in S^p(0,T;{\bf {R}})\times M^p(0,T;{\bf {R}}^d)$. It follows from (H1) that $|g(s,y_s,z_s)|\leq |g(s,0,0)|+u_s|y_s|+v_s|z_s|$, then from inequality $(a+b+c)^p\leq 3^p(a^p+b^p+c^p)$, H\"{o}lder's inequality and (H2), we have
\begin{flalign*}
\mathbb{E}\left[\left(\int_0^T|g(s,y_s,z_s)|{\rm d}s\right)^p\right]&\leq 3^p\mathbb{E}\left[\left(\int_0^T|g(s,0,0)|{\rm d}s\right)^p\right]+(3M)^p\mathbb{E}\left[\sup\limits_{s\in[0,T]}|y_s|^p\right]\nonumber \\
&\quad+3^pM^{\frac{p}{2}}\mathbb{E}\left[\left(\int_0^T|z_s|^2{\rm d}s\right)^{\frac{p}{2}}\right]<+\infty.
\end{flalign*}
As a result, the process $\left(\mathbb{E}\left[\left.\xi+\int_0^Tg(s,y_s,z_s){\rm d}s\right|\mathcal{F}_t\right]\right)_{0\leq t\leq T}$ is a $L^p$ martingale. It then follows from the martingale representation theorem that there exists a unique process $Z_\cdot\in M^p(0,T;{\bf {R}}^d)$ such that
\begin{flalign*}
\mathbb{E}\left[\left.\xi+\int_0^Tg(s,y_s,z_s){\rm d}s\right|\mathcal{F}_t\right]=\mathbb{E}\left[\xi+\int_0^Tg(s,y_s,z_s){\rm d}s\right]+\int_0^tZ_s\cdot{\rm d}B_s,~~0\leq t\leq T.
\end{flalign*}
Let $Y_t:=\mathbb{E}\left[\left.\xi+\int_t^Tg(s,y_s,z_s){\rm d}s\right|\mathcal{F}_t\right]$, $0\leq t\leq T$. Obviously, $Y_\cdot\in S^p(0,T;{\bf {R}})$, and it is not difficult to verify that the $(y_t,z_t)_{t\in[0,T]}$ is just the unique $L^p$ solution to the following equation:
\begin{flalign}\label{bsde1}
Y_t=\xi+\int_t^Tg(s,y_s,z_s){\rm d}s-\int_t^TZ_s\cdot{\rm d}B_s,~~t\in [0,T].
\end{flalign}
Thus, we have constructed a mapping from $S^p(0,T;{\bf {R}})\times M^p(0,T;{\bf {R}}^d)$ to itself. Denote this mapping by $I:(y_t,z_t)_{t\in [0,T]}\longrightarrow(Y_t,Z_t)_{t\in [0,T]}$.

Now, suppose that $(y^i_t,z^i_t)_{t\in[0,T]}\in S^p(0,T;{\bf {R}})\times M^p(0,T;{\bf {R}}^d)$, and let $(Y^i_t,Z^i_t)_{t\in[0,T]}$ be the mapping of $(y^i_t,z^i_t)_{t\in[0,T]}$, $i=1,2$, that is, $I(y^i_t,z^i_t)_{t\in[0,T]}=(Y^i_t,Z^i_t)_{t\in[0,T]}$, $i=1,2$. We denote
\begin{flalign*}
&\hat{Y}_t:=Y_t^1-Y_t^2,~~\hat{Z}_t:=Z_t^1-Z_t^2,~~\hat{y}_t:=y_t^1-y_t^2,~~\hat{z}_t:=z_t^1-z_t^2,\\
&\hat{g}_t:=g(t,y_t^1,z_t^1)-g(t,y_t^2,z_t^2),~~t\in [0,T].
\end{flalign*}
Then $(\hat{Y}_t,\hat{Z}_t)_{t\in [0,T]}$ is a $L^p$ solution of the following BSDE:
\begin{flalign*}
\hat{Y}_t=\int_t^T\hat{g}_s{\rm d}s-\int_t^T\hat{Z}_s\cdot{\rm d}B_s,~~t\in [0,T].
\end{flalign*}
Furthermore, (\ref{lem ineq3}) of Lemma \ref{lem} yields that there exists a constant $c_p>0$ depending only on $p$ such that for each $t\in [0,T]$,
\begin{flalign*}
\mathbb{E}\left[\sup\limits_{s\in [t,T]}|\hat{Y}_s|^p+\left(\int_t^T|\hat{Z}_s|^2{\rm d}s\right)^{\frac{p}{2}}\right]\leq c_p\mathbb{E}\left[\left(\int_t^T|\hat{g}_s|{\rm d}s\right)^p\right].
\end{flalign*}
Thus, by virtue of (H1) and H\"{o}lder's inequality we can deduce that for each $t\in [0,T]$,
\begin{flalign}\label{ineq7}
&\mathbb{E}\left[\sup\limits_{s\in [t,T]}|\hat{Y}_s|^p+\left(\int_t^T|\hat{Z}_s|^2{\rm d}s\right)^{\frac{p}{2}}\right] \nonumber \\
&\quad\leq c_p\mathbb{E}\left[\left(\left(\int_t^Tu_s{\rm d}s\right)^p+\left(\int_t^Tv_s^2{\rm d}s\right)^{\frac{p}{2}}\right)\left(\sup\limits_{s\in [t,T]}|\hat{y}_s|^p+\left(\int_t^T|\hat{z}_s|^2{\rm d}s\right)^{\frac{p}{2}}\right)\right].
\end{flalign}

In the sequel, we choose a large sufficiently number $N$ such that
\begin{flalign*}
\frac{M}{N}\leq \frac{1}{(4c_p)^{1/p}}\wedge \frac{1}{(4c_p)^{2/p}},
\end{flalign*}
and subdivide the interval $[0,T]$ into some small stochastic intervals like $[T_{i-1},T_i],i=1,\cdot\cdot\cdot N$, by defining the following $(\mathcal{F}_t)$-stopping times:
\begin{flalign*}
&T_0=0; \\
&T_1=\inf\left\{t\geq 0:\int_0^t\left(u_s+v_s^2\right){\rm d}s\geq \frac{M}{N}\right\}\wedge T; \\
&\qquad\vdots \\
&T_i=\inf\left\{t\geq T_{i-1}:\int_0^t\left(u_s+v_s^2\right){\rm d}s\geq \frac{iM}{N}\right\}\wedge T; \\
&\qquad\vdots \\
&T_N=\inf\left\{t\geq T_{N-1}:\int_0^t\left(u_s+v_s^2\right){\rm d}s\geq \frac{NM}{N}\right\}\wedge T=T.
\end{flalign*}
Thus, for any $[T_{i-1},T_i]\subset [0,T],i=1,\cdot\cdot\cdot N$, it follows that
\begin{flalign}\label{stop time estimate}
\left(\int_{T_{i-1}}^{T_i}u_s{\rm d}s\right)^p+\left(\int_{T_{i-1}}^{T_i}v_s^2{\rm d}s\right)^{\frac{p}{2}}\leq \frac{1}{2c_p}.
\end{flalign}
Now, with the help of inequality (\ref{ineq7}), we have
\begin{flalign*}
\mathbb{E}\left[\sup\limits_{s\in [T_{N-1},T]}|\hat{Y}_s|^p+\left(\int_{T_{N-1}}^T|\hat{Z}_s|^2{\rm d}s\right)^{\frac{p}{2}}\right]\leq \frac{1}{2}\mathbb{E}\left[\sup\limits_{s\in [T_{N-1},T]}|\hat{y}_s|^p+\left(\int_{T_{N-1}}^T|\hat{z}_s|^2{\rm d}s\right)^{\frac{p}{2}}\right],
\end{flalign*}
which means that $I$ is a strict contraction from $S^p(T_{N-1},T;{\bf {R}})\times M^p(T_{N-1},T;{\bf {R}}^d)$ into itself. Then $I$ admits a unique fixed point in this space. It follows that there exists a unique $(y_t,z_t)_{t\in[T_{N-1},T]}\in S^p(T_{N-1},T;{\bf {R}})\times M^p(T_{N-1},T;{\bf {R}}^d)$ satisfying BSDE $(\xi,T,g)$ on $[T_{N-1},T]$. That is to say, BSDE $(\xi,T,g)$ admits a unique $L^p$ solution on $[T_{N-1},T]$.

Finally, note that (\ref{stop time estimate}) holds true for $i=N-1$. By replacing $T_{N-1}$, $T$ and $\xi$ by $T_{N-2}$, $T_{N-1}$ and $y_{T_{N-1}}$, respectively, in the above proof, we can obtain the existence and uniqueness for the $L^p$ solution of BSDE $(\xi,T,g)$ on $[T_{N-2},T_{N-1}]$. Furthermore, repeating the above procedure and making use of (\ref{stop time estimate}), we deduce the existence and uniqueness for the $L^p$ solution of BSDE $(\xi,T,g)$ on $[T_{N-3},T_{N-2}],\cdots,[0,T_1]$. The proof of Theorem \ref{existence and uniqueness} is then completed.
\end{proof}

\begin{rem}
It is easy to see that Theorem \ref{existence and uniqueness} holds still true for multidimensional BSDEs.
\end{rem}

The following example shows that assumption (H1) is strictly weaker than the corresponding assumption in \citet{Chen00}. For readers' convenience, we list the assumption of \citet{Chen00} as the following (H1'):
\begin{description}
  \item(H1')~$g$ is Lipschitz continuous in $(y,z)$, non-uniformly in $t$, i.e., there exist two functions $\bar{u}(t),~\bar{v}(t):[0,T]\mapsto \R^+$ satisfying
       \begin{flalign*}
       \int_0^T\left[\bar{u}(t)+\bar{v}^2(t)\right]{\rm d}t<+\infty
       \end{flalign*}
       such that ${\rm d}P\times {\rm d}t -a.e.$, for each $y_1,y_2\in \R$, $z_1,z_2\in \R^d$,
      \begin{flalign*}
      \left|g(\omega,t,y_1,z_1)-g(\omega,t,y_2,z_2)\right|\leq \bar{u}(t)|y_1-y_2|+\bar{v}(t)|z_1-z_2|.
      \end{flalign*}
\end{description}
{\bf{Example 3.1 }}Let $0\leq T \leq +\infty$, and for each $ t_0\in (0,T)$, define the following two stopping times:
\begin{flalign*}
&\tau_1(\omega)=\inf\left\{t>t_0:|B_{t_0}(\omega)|(t-t_0)\geq M/2\right\}\wedge T,\\
&\tau_2(\omega)=\inf\left\{t>t_0:|B_{t_0}(\omega)|^2(t-t_0)\geq M/2\right\}\wedge T.
\end{flalign*}
Consider the generator $\tilde{g}(\omega, t,y,z):=\tilde{u}_t(\omega)|y|+\tilde{v}_t(\omega)|z|$, where
\begin{flalign*}
\tilde{u}_t(\omega)=|B_{t_0}(\omega)|
\mathbbm{1}_{((t_0,\tau_1(\omega)]]}(\omega,t),~~\tilde{v}_t(\omega)
=|B_{t_0}|\mathbbm{1}_{((t_0,\tau_2(\omega)]]}(\omega,t),~~(t,\omega)\in [0,T]\times \Omega.
\end{flalign*}
It is clear that $\tilde{g}$ satisfies assumptions (H1) and (H2) with $u_t=\tilde{u}_t$ and $v(t)=\tilde{v}_t$. Then, by  Theorem \ref{existence and uniqueness} we know that for each $p>1$ and each $\xi \in L^p(\Omega,\mathcal{F}_T,P;\R)$, BSDE $(\xi,T,\tilde{g})$ admits a unique $L^p$ solution.

We especially mention that this $\tilde{g}$ does not satisfy the above assumption (H1'). In fact, if assumption (H1') holds true for $\tilde{g}$, then there exist two deterministic functions $\bar{u}(t),~\bar{v}(t):[0,T]\mapsto \R^+$ such that
\begin{flalign}\label{example 1}
 \tilde{u}_t(\omega)\leq \bar{u}(t),\ \ \  \tilde{v}_t(\omega)\leq \bar{v}(t),
 ~~{\rm d}P\times {\rm d}t -a.e.
 \end{flalign}
 and
 \begin{flalign}\label{example 2}
 \int_0^T\left[\bar{u}(t)+\bar{v}^2(t)\right]{\rm d}t < +\infty.
\end{flalign}
This yields a contradiction which will be shown below. Note first that for each $t\in (t_0,T)$, we have
\begin{flalign*}
\left\{\omega:\tilde{u}_t(\omega)>\bar{u}(t)\right\}&=\left\{\omega:t\leq \tau_1(\omega)~{\rm{and}}~|B_{t_0}(\omega)|>\bar{u}(t)\right\}\\
&=\left\{\omega:|B_{t_0}(\omega)|\leq \frac{M}{2(t-t_0)}~{\rm{and}}~|B_{t_0}(\omega)|>\bar{u}(t)\right\},
\end{flalign*}
and note that $B_{t_0}(\omega)$ is a normal random variable with $zero$-expected value and $t_0$-variance values. If $\bar{u}(t)<\frac{M}{2(t-t_0)}$ for some $t\in (t_0,T)$, then $P\left(\left\{\omega:\tilde{u}_t(\omega)>\bar{u}(t)\right\}\right)>0.$ Using this fact and (\ref{example 1}) we can conclude that
\begin{flalign*}
\bar{u}_t \geq \frac{M}{2(t-t_0)},~~{\rm d}t-a.e.~{\rm in}~(t_0,T).
\end{flalign*}
Thus,
\begin{flalign*}
\int_0^T\bar{u}(t){\rm d}t\geq \frac{M}{2}\int_{t_0}^T\frac{1}{t-t_0}{\rm d}t=+\infty,
\end{flalign*}
which contradicts with (\ref{example 2}).

Hence, our assumption (H1) is strictly weaker than (H1') used in \citet{Chen00}.

\section{A general comparison theorem}
In this section, by developing a method employed in \citet{Fan Jiang and Tian11} and \citet{Ma13} we will prove a general comparison theorem for the $L^p~(p>1)$ solution of BSDE (\ref{bsde}). Let us first introduce the following assumptions, where $0\leq T\leq+\infty$.
\begin{description}
  \item(H3)~$g$ is monotonic in $y$, non-uniformly with respect to both $t$ and $\omega$ , i.e., there exists a $(\mathcal{F}_t)$-progressively measurable nonnegative process $\{u_t(\omega)\}_{t\in [0,T]}$ satisfying
       \begin{flalign*}
        \int_0^Tu_t(\omega){\rm d}t\leq M,~~{\rm d}P-a.s.
       \end{flalign*}
       for some constant $M>0$ such that ${\rm d}P\times {\rm d}t -a.e.$, for each $y_1,y_2\in \R$, $z_1,z_2\in \R^d$,
      \begin{flalign*}
      \sgn(y_1-y_2)\left(g(\omega,t,y_1,z)-g(\omega,t,y_2,z)\right)\leq u_t(\omega)|y_1-y_2|;
      \end{flalign*}
  \item(H4)~$g$ is uniformly continuous in $z$, non-uniformly with respect to both $t$ and $\omega$ , i.e., there exist a linear-growth function $\phi(\cdot)\in \bf{S}$ and a $(\mathcal{F}_t)$-progressively measurable nonnegative process $\{v_t(\omega)\}_{t\in [0,T]}$ satisfying
       \begin{flalign*}
        \int_0^Tv_t^2(\omega){\rm d}t\leq M,~~{\rm d}P-a.s.
       \end{flalign*}
       such that ${\rm d}P\times {\rm d}t -a.e.$, for each $y_1,y_2\in \R$, $z_1,z_2\in \R^d$,
      \begin{flalign*}
      \left|g(\omega,t,y,z_1)-g(\omega,t,y,z_2)\right|\leq v_t(\omega)\phi(|z_1-z_2|).
      \end{flalign*}
      Here and henceforth, we always assume that $0\leq\phi(x)\leq ax+b$ for all $x\in \R^+$. Furthermore, when $b\neq 0$, we also assume that $\int_0^Tv_t(\omega){\rm d}t\leq M$, ${\rm d}P-a.s.$, where $M$ is defined in (H3).
\end{description}

The following Theorem \ref{general comparison theorem} establishes a general comparison theorem for BSDEs under assumptions (H3) and (H4), which generalizes partly Theorem 2 in \citet{Fan Jiang and Tian11}, where the $u_t(\omega)$ and $v_t(\omega)$ in (H3) and (H4) do not depend on $\omega$ and $p=2$, and Lemma 1 in \citet{Ma13}, where the $u_t(\omega)$ and $v_t(\omega)$ need to be bounded processes and $T<+\infty$.

\begin{thm}\label{general comparison theorem}
Let $p>1$, $0\leq T\leq+\infty$, $\xi,\xi'\in L^p(\Omega,\mathcal{F}_T,P;\R)$, $g$ and $g'$ be two generators of BSDEs, and let $(y_t,z_t)_{t\in[0,T]}$ and $(y'_t,z'_t)_{t\in[0,T]}$ be, respectively, a $L^p$ solution to BSDE $(\xi,T,g)$ and BSDE $(\xi',T,g')$. If ${\rm d}P-a.s.$, $\xi\leq\xi'$, $g$ (resp.~$g'$) satisfies (H3) and (H4) and ${\rm d}P\times {\rm d}t -a.e.$, $g(t,y'_t,z'_t)\leq g'(t,y'_t,z'_t)$ (resp.~$g(t,y_t,z_t)\leq g'(t,y_t,z_t)$), then for each $t\in[0,T]$, we have
\begin{flalign*}
{\rm d}P-a.s.,~~y_t\leq y'_t.
\end{flalign*}
\end{thm}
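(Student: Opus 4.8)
The plan is to show $(y_t-y'_t)^+=0$ for every $t$, equivalently $y_t\leq y'_t$. Put $\hat y_t:=y_t-y'_t$, $\hat z_t:=z_t-z'_t$; then $\hat y_T=\xi-\xi'\leq0$ a.s.\ and
$$\hat y_t=\hat y_T+\int_t^T\big(g(s,y_s,z_s)-g'(s,y'_s,z'_s)\big){\rm d}s-\int_t^T\hat z_s\cdot{\rm d}B_s,\quad t\in[0,T].$$
I would treat the case in which it is $g$ (rather than $g'$) that satisfies (H3)--(H4) and $g(t,y'_t,z'_t)\leq g'(t,y'_t,z'_t)$ a.e.; the other case is symmetric, routing the decomposition below through $g'$ and using the monotonicity/uniform-continuity processes of $g'$. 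Decompose the generator difference as
$$g(s,y_s,z_s)-g'(s,y'_s,z'_s)=\underbrace{\big[g(s,y_s,z_s)-g(s,y'_s,z_s)\big]}_{(\mathrm I)}+\underbrace{\big[g(s,y'_s,z_s)-g(s,y'_s,z'_s)\big]}_{(\mathrm{II})}+\underbrace{\big[g(s,y'_s,z'_s)-g'(s,y'_s,z'_s)\big]}_{(\mathrm{III})},$$
where $(\mathrm{III})\leq0$ a.e., $\mathbbm{1}_{\{\hat y_s>0\}}\,(\mathrm I)\leq u_s\hat y_s^+$ by (H3), and $|(\mathrm{II})|\leq v_s\phi(|\hat z_s|)$ by (H4).

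The core estimate comes from applying It\^{o}'s formula to $e^{A_t}(\hat y_t^+)^2$ --- legitimate because $x\mapsto(x^+)^2$ is $C^1$ with Lipschitz derivative, so no local-time term occurs --- where $A_t:=\int_0^t a_s\,{\rm d}s$ and the nonnegative density $a_s$ will be of the form $c_1u_s+c_2v_s^2$ (plus a term $c_3v_s$ when $b\neq0$). Using $\hat y_T^+=0$, keeping the term $\int_t^Te^{A_s}\mathbbm{1}_{\{\hat y_s>0\}}|\hat z_s|^2\,{\rm d}s$ furnished by the quadratic variation on the left, bounding $(\mathrm I)$--$(\mathrm{III})$ as above, and applying Young's inequality to absorb the $|\hat z_s|$ produced by the linear part of $\phi$ into that term, one chooses the constants $c_1,c_2$ (and $c_3$) so that, after these estimates, the $(\hat y_s^+)^2$-terms cancel and only the $|\hat z_s|^2$-term (on the left) and --- when $b\neq0$ --- a residual survive. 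Assumption (\ref{beasic assumption}), and $\int_0^Tv_s\,{\rm d}s\leq M$ when $b\neq0$, is exactly what makes $A_t$ bounded a.s., so $e^{A_t}\asymp 1$. Localizing the stochastic integral by stopping times $\sigma_k\uparrow T$, taking expectations, and letting $k\to\infty$ --- using $(\hat y,\hat z)\in S^p\times M^p$ with Fatou's lemma and dominated convergence; for $1<p<2$ one should work with $(\hat y^+)^p$ via a regularization $(\varepsilon+(\hat y^+)^2)^{p/2}$ rather than with $(\hat y^+)^2$ to keep integrands integrable, but the structure is unchanged --- one reaches, in the purely linear case $b=0$ (in particular when $g$ is Lipschitz in $z$), the bound $\mathbb{E}[(\hat y_t^+)^2]\leq0$, hence $y_t\leq y'_t$ a.s.\ for each $t$ and, by path-continuity, for all $t$ simultaneously.

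The main obstacle is the genuinely nonlinear case $b\neq0$, where a residual proportional to $b^2\int_0^Tv_s\,{\rm d}s$ is left over. The tool to remove it is Lemma~\ref{linear growth sequence bounds}: for each $n\geq1$, $\phi(x)\leq(n+2K)x+\phi\big(\tfrac{2K}{n+2K}\big)$, which replaces the residual by one proportional to $\phi\big(\tfrac{2K}{n+2K}\big)^2\to0$ at the cost of inflating the coefficient of $v_s^2$ in $a_s$ (by a factor $\asymp n^2$). The delicate point --- and where I expect essentially all the difficulty to lie --- is to send $n\to\infty$ without the inflated weight $e^{A_T}$ absorbing the gain; I would attempt this by subdividing $[0,T]$ with the stopping times used in the proof of Theorem~\ref{existence and uniqueness}, so that each piece carries only a small portion of $\int(u_s+v_s^2+v_s)\,{\rm d}s$, running the weighted estimate piece by piece, and inducting backwards from $[T_{N-1},T]$ down to $[0,T_1]$ while tracking the per-piece residuals; an alternative route is to approximate $g$ from below by generators that are Lipschitz in $z$ and still monotone in $y$, apply the comparison already established to each approximation, and pass to the limit using convergence of the approximating solutions, in the spirit of the linear-growth construction of Section~5. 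Reducing the general statement to one of these routes is the heart of the matter.
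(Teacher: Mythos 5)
Your setup, the decomposition of $g(s,y_s,z_s)-g'(s,y'_s,z'_s)$ into the three pieces controlled by (H3), (H4) and the sign hypothesis, and the treatment of the symmetric case all match the paper. Your argument is also essentially complete in the special case $b=0$, where $\phi$ is linear and Young's inequality absorbs $v_s|\hat z_s|$ into the quadratic-variation term at the cost of an $n$-independent weight. But the theorem's content is precisely the case of a general uniformly continuous $\phi$ with $b\neq 0$, and there your proposal stops at an acknowledged gap: after invoking Lemma \ref{linear growth sequence bounds} the Young-inequality step turns the coefficient $(n+2c)v_s$ of $|\hat z_s|$ into $(n+2c)^2v_s^2$ inside the exponential weight, and neither of your two escape routes closes the argument. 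Subdividing $[0,T]$ by the stopping times of Theorem \ref{existence and uniqueness} cannot help, because the per-piece factors $\exp\bigl(c\,(n+2c)^2\int_{T_{i-1}}^{T_i}v_s^2\,{\rm d}s\bigr)$ multiply back to $\exp\bigl(c\,(n+2c)^2 M\bigr)$ no matter how fine the subdivision, while the residual decays only at the rate $\phi\bigl(2c/(n+2c)\bigr)$, which can be arbitrarily slow. Approximating $g$ from below by generators Lipschitz in $z$ compares $y'$ with the \emph{minimal} solution of BSDE $(\xi,T,g)$ (and shows $y^n\leq y$), but the theorem must compare $y'$ with the \emph{given} solution $(y,z)$, and (H3)--(H4) do not guarantee uniqueness, so this route proves a weaker statement.

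The idea you are missing is to stay at first order and change measure rather than square and use Young. The paper applies Tanaka's formula to $e^{A_t}\hat y_t^+$ itself, with $A_t=\int_0^t u_s\,{\rm d}s$ involving only $u$ and hence independent of $n$; after the linearization $\phi(x)\leq(n+2c)x+{\bf 1}_{b\neq 0}\phi\bigl(2c/(n+2c)\bigr)$, the troublesome term $(n+2c)v_s|\hat z_s|$ is written as a drift $-(n+2c)v_s\hat z_s/|\hat z_s|\,\mathbbm{1}_{|\hat z_s|\neq0}\,{\rm d}s$ attached to the stochastic integral $\int e^{A_s}\mathbbm{1}_{\hat y_s>0}\hat z_s\cdot{\rm d}B_s$, and is removed by a Girsanov transformation to a measure $P_n$. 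This is legitimate for \emph{every} $n$ precisely because $\int_0^Tv_s^2\,{\rm d}s\leq M$ ${\rm d}P$-a.s., so the exponential density satisfies Novikov's condition and has moments of all orders; BDG plus H\"older (using $\hat z\in M^p$) then shows the integral is a true $P_n$-martingale. Taking $\E_n[\,\cdot\,|\F_t]$ yields $e^{A_t}\hat y_t^+\leq a_n$ with $a_n\leq{\bf 1}_{b\neq0}\,\phi\bigl(2c/(n+2c)\bigr)Me^M\to0$, and since the left-hand side does not depend on $n$, letting $n\to\infty$ finishes the proof. In short, Girsanov converts the linear-in-$|\hat z|$ term into a martingale at zero cost in the weight, which is exactly what your quadratic/Young approach cannot do.
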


\begin{proof}
Assume that ${\rm d}P-a.s.$, $\xi\leq \xi'$, $g$ satisfies (H3) and (H4) and ${\rm d}P\times {\rm d}t -a.e.$, $g(t,y'_t,z'_t)\leq g'(t,y'_t,z'_t)$. Setting $\hat{y}_t=y_t-y'_t$, $\hat{z}_t=z_t-z'_t$, $\hat{\xi}=\xi-\xi'$, since $g(s,y'_s,z'_s)-g'(s,y'_s,z'_s)$ is non-positive, we have
\begin{flalign*}
g(s,y_s,z_s)-g'(s,y'_s,z'_s)&=g(s,y_s,z_s)-g(s,y'_s,z'_s)+g(s,y'_s,z'_s)-g'(s,y'_s,z'_s)\\
&\leq g(s,y_s,z_s)-g(s,y'_s,z_s)+g(s,y'_s,z_s)-g(s,y'_s,z'_s)
\end{flalign*}
and we deduce, using assumptions (H3) and (H4), that
\begin{flalign}\label{Thm32}
\mathbbm{1}_{\hat{y}_s>0}[g(s,y_s,z_s)-g'(s,y'_s,z'_s)]\leq u_s\hat{y}_s^++\mathbbm{1}_{\hat{y}_s>0}v_s\phi(|\hat{z}_s|).
\end{flalign}
Thus Tanaka's formula with (\ref{Thm32}) leads to the following inequality, with $A_t:=\int_0^tu_s{\rm d}s$,
\begin{flalign}\label{tanaka formula}
e^{A_t}\hat {y}_t^+&\leq e^{A_T}\hat{\xi}^++\int_t^Te^{A_s}\left\{\mathbbm{1}_{\hat{y}_s>0}[g(s,y_s,z_s)-g'(s,y'_s,z'_s)]-u_s\hat {y}_s^+\right\}{\rm d}s
-\int_t^Te^{A_s}\mathbbm{1}_{\hat{y}_s>0}\hat{z}_s\cdot {\rm d}B_s,
\nonumber\\&\leq \int_t^Te^{A_s}\mathbbm{1}_{\hat{y}_s>0}v_s\phi(|\hat{z}_s|){\rm d}s-\int_t^Te^{A_s}\mathbbm{1}_{\hat{y}_s>0}\hat{z}_s\cdot{\rm d}B_s,~~t\in [0,T].
\end{flalign}
Furthermore, note that Lemma \ref{linear growth sequence bounds} with $\Psi(\cdot)=\phi(\cdot)$ and $K=c:=a+b$ yields that
\begin{flalign}\label{Thm33}
\forall~n\geq 1, ~x\in \R^+, ~\phi(x)\leq (n+2c)x+{\bf 1}_{b\neq 0}\phi\left(\frac{2c}{n+2c}\right).
\end{flalign}
where ${\bf 1}_{b\neq 0}=1$ if $b\neq 0$ and ${\bf 1}_{b\neq 0}=0$ if $b=0$. By (\ref{Thm32})-(\ref{Thm33}), we get that, for each $n\geq 1$ and each $t\in [0,T]$,
\begin{flalign}\label{Thm34}
e^{A_t}\hat{y}_t^+&\leq a_n+\int_t^T\left[e^{A_s}\mathbbm{1}_{\hat{y}_s>0}(n+2c)v_s|\hat{z}_s|\right]{\rm d}s-\int_t^Te^{A_s}\mathbbm{1}_{\hat{y}_s>0}\hat{z}_s\cdot {\rm d}B_s \nonumber\\
&=a_n-\int_t^Te^{A_s}\mathbbm{1}_{\hat{y}_s>0}\hat{z}_s\cdot\left[-\frac{(n+2c)v_s\hat{z}_s}{|\hat{z}_s|}\mathbbm{1}_{|\hat{z}_s|\neq 0}{\rm d}s+{\rm d}B_s\right],
\end{flalign}
where, by (H4),
\begin{flalign}\label{Thm35}
a_n={\bf 1}_{b\neq 0}\phi\left(\frac{2c}{n+2c}\right)\cdot\left\|\int_0^Te^{A_s}v_s{\rm d}s\right\|_{\infty}\leq {\bf 1}_{b\neq 0}\phi\left(\frac{2c}{n+2c}\right)\cdot M\cdot e^M\rightarrow 0~{\rm as}~n\rightarrow \infty.
\end{flalign}

In the sequel, let $P_n$ be the probability on $(\Omega,\mathcal{F})$ which is equivalent to $P$ and defined by
\begin{flalign*}
\frac{{\rm d}P_n}{{\rm d}P}:=\exp\left\{(n+2c)\int_0^T\frac{v_s\hat{z}_s}{|\hat{z}_s|}\mathbbm{1}_{|\hat{z}_s|\neq 0}\cdot {\rm d}B_s-\frac{1}{2}(n+2c)^2\int_0^T\mathbbm{1}_{|\hat{z}_s|\neq 0}v^2_s{\rm d}s\right\}.
\end{flalign*}
It is worth noting that ${\rm d}P_n/{\rm d}P$ has moments of all orders since $\int_0^Tv^2(s){\rm d}s\leq M$, ${\rm d}P-a.s.$. By Girsanov's theorem, under $P_n$ the process
\begin{flalign*}
B_n(t)=B_t-\int_0^t\frac{(n+2c)v_s\hat{z}_s}{|\hat{z}_s|}\mathbbm{1}_{|\hat{z}_s|\neq 0}{\rm d}s,~~t\in [0,T]
\end{flalign*}
is Brownian motion. Moreover, the process $\left(\int_0^te^{A_s}\mathbbm{1}_{\hat{y}_s>0}\hat{z}_s\cdot {\rm d}B_n(s)\right)_{t\in [0,T]}$ is a $(\mathcal{F}_n,P_n)$-martingale. Indeed, let $\mathbb{E}_n[X|\mathcal{F}_t]$ represent the conditional expectation of random variable $X$ with respect to $\mathcal{F}_t$ under $P_n$ and let $\mathbb{E}_n[X]\hat{=}\mathbb{E}_n[X|\mathcal{F}_0]$, then from the Burkholder-Davis-Gundy (BDG) inequality and H\"{o}lder's inequality, we have
\begin{flalign*}
\mathbb{E}_n\left[\sup_{0\leq t\leq T}\left|\int_0^te^{A_s}\mathbbm{1}_{\hat{y}_s>0}\hat{z}_s\cdot {\rm d}B_n(s)\right|\right]&\leq 4e^M\mathbb{E}_n\left[\sqrt{\int_0^T|\hat{z}_s|^2{\rm d}s}\right]\\
&\leq 4e^M\mathbb{E}\left[\left(\frac{{\rm d}P_n}{{\rm d}P}\right)^{\frac{p}{p-1}}\right]^{\frac{p-1}{p}}\mathbb{E}\left[\left(\int_0^T|\hat{z}_s|^2{\rm d}s\right)^{\frac{p}{2}}\right]^{\frac{1}{p}}<+\infty.
\end{flalign*}
Thus, by taking the conditional expectation with respect to $\mathcal{F}_t$ under $P_n$ in (\ref{Thm34}), we obtain that for each $n\geq 1$ and $t\in [0,T]$,
\begin{flalign}\label{Thm36}
e^{A_t}\hat{y}_t^+\leq a_n,\ \ {\rm d}P-a.s.
\end{flalign}
And in view of (\ref{Thm35}), it follows that for each $t\in[0,T]$, ${\rm d}P-a.s.$, $y_t\leq y'_t$.\vspace{0.1cm}

Now, let us assume that ${\rm d}P-a.s.$, $\xi\leq \xi'$, $g'$ satisfies (H3) and (H4) and ${\rm d}P\times {\rm d}t -a.e.$, $g(t,y_t,z_t)\leq g'(t,y_t,z_t)$. Then, since $g(s,y_s,z_s)-g'(s,y_s,z_s)$ is non-positive, we have
\begin{flalign*}
g(s,y_s,z_s)-g'(s,y'_s,z'_s)&=g(s,y_s,z_s)-g'(s,y_s,z_s)+g'(s,y_s,z_s)-g'(s,y'_s,z'_s)\\
&\leq g'(s,y_s,z_s)-g'(s,y'_s,z_s)+g'(s,y'_s,z_s)-g'(s,y'_s,z'_s),
\end{flalign*}
and using (H3) and (H4), we know that inequality (\ref{Thm32}) holds still true. Therefore, the same proof as above yields that for each $t\in[0,T]$, ${\rm d}P-a.s.$, $y_t\leq y'_t$. Theorem \ref{general comparison theorem} is proved.
\end{proof}

From Theorem \ref{general comparison theorem}, the following corollary is immediate.
\begin{cor}\label{corollary}
Let $p>1$, $0\leq T\leq+\infty$, $\xi,\xi'\in L^p(\Omega,\mathcal{F}_T,P;\R)$, one of generaors $g$ and $g'$ satisfy assumptions (H3) and (H4), and $(y_t,z_t)_{t\in[0,T]}$ and $(y'_t,z'_t)_{t\in[0,T]}$ be, respectively, a $L^p$ solution to BSDE $(\xi,T,g)$ and BSDE $(\xi',T,g')$. If ${\rm d}P-a.s.$, $\xi\leq\xi'$, and ${\rm d}P\times {\rm d}t -a.e.$, $g(t,y,z)\leq g'(t,y,z)$ for any $(y,z)\in \R\times\R^d$, then for each $t\in[0,T]$,
${\rm d}P-a.s.,~y_t\leq y'_t.$
\end{cor}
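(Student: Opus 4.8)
The plan is to derive the corollary directly from Theorem~\ref{general comparison theorem}, with no new estimate required. Suppose first that it is $g$ that satisfies (H3) and (H4). Theorem~\ref{general comparison theorem} then asks, besides ${\rm d}P$-a.s. $\xi\le\xi'$, only for the single inequality ${\rm d}P\times{\rm d}t$-a.e., $g(t,y'_t,z'_t)\le g'(t,y'_t,z'_t)$, evaluated along the second solution. But this is a particular instance of the blanket hypothesis $g(t,y,z)\le g'(t,y,z)$ for every $(y,z)$, so Theorem~\ref{general comparison theorem} applies and yields $y_t\le y'_t$ for each $t\in[0,T]$. Symmetrically, if instead $g'$ satisfies (H3) and (H4), the theorem requires ${\rm d}P\times{\rm d}t$-a.e., $g(t,y_t,z_t)\le g'(t,y_t,z_t)$, again a special case of the blanket inequality, and the same conclusion follows.

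The only point deserving a word is that the hypothesis ``${\rm d}P\times{\rm d}t$-a.e., $g(t,y,z)\le g'(t,y,z)$ for any $(y,z)$'' a priori permits the exceptional ${\rm d}P\times{\rm d}t$-null set to depend on $(y,z)$, whereas we must substitute the random point $(y'_t(\omega),z'_t(\omega))$ (resp. $(y_t(\omega),z_t(\omega))$) coming from the solution. I would handle this in the standard way: take the union over $(y,z)\in\mathbb{Q}\times\mathbb{Q}^d$ of the corresponding null sets to obtain a single ${\rm d}P\times{\rm d}t$-null set $N$ off which $g(\omega,t,y,z)\le g'(\omega,t,y,z)$ holds simultaneously for all rational $(y,z)$, and then pass to arbitrary real $(y,z)$ by continuity. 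Whichever of $g,g'$ satisfies (H3)--(H4) is continuous in $z$ (by (H4), since $\phi\in\mathbf{S}$ is continuous and $\phi(0)=0$), and is continuous in $y$ under the standing continuity assumption on the generator; hence the pointwise inequality transfers to the progressively measurable processes $(y'_t,z'_t)$ (resp. $(y_t,z_t)$), which is exactly what Theorem~\ref{general comparison theorem} consumes.

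Since nothing beyond this bookkeeping is involved, I do not anticipate any genuine obstacle; the only mildly delicate spot is the measurable-substitution argument just outlined, and it is entirely routine. In the write-up I would state it in one or two lines and then simply cite Theorem~\ref{general comparison theorem}.
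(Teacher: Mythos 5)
Your argument is correct and coincides with the paper's, which simply declares the corollary immediate from Theorem \ref{general comparison theorem}: the blanket inequality $g\le g'$ specializes to the single instance along $(y'_t,z'_t)$ (resp.\ $(y_t,z_t)$) that the theorem requires. The measurable-substitution digression is unnecessary, since the hypothesis is phrased with the ${\rm d}P\times{\rm d}t$-null set preceding the quantifier over $(y,z)$, so no density-plus-continuity argument is needed (and your appeal to continuity in $y$ would in any case be shaky, as (H3) is only a one-sided monotonicity condition and does not provide it).
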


\section{ An existence result of the minimal solutions}

In this section, we will put forward and prove an existence result of the minimal $L^p~(p>1)$ solution for BSDE (\ref{bsde})---Theorem \ref{minimal solution}, by improving the method used in \citet{Izumi13} to prove in a direct way that the sequence of solutions of the BSDEs approximated by the Lipschitz generators is a Cauchy sequence in $S^p\times M^p$. And, based on Theorem \ref{minimal solution} together with Theorem \ref{general comparison theorem}, we will also give a new comparison theorem of the minimal $L^p~(p>1)$ solutions of BSDEs (see Theorem \ref{theorem 5.2}), and a general existence and uniqueness theorem of $L^p~(p>1)$ solutions of BSDEs (see Theorem \ref{theorem 5.3}).
First, we introduce the following assumptions with respect to the generator $g$, where $0\leq T\leq +\infty$.

\begin{description}
  \item(H5)~$g$ has a linear growth in $(y,z)$, non-uniformly with respect to both $t$ and $\omega$ , i.e., there exist three
      $(\mathcal{F}_t)$-progressively measurable nonnegative processes $\{u_t(\omega)\}_{t\in [0,T]}$, $\{v_t(\omega)\}_{t\in [0,T]}$ and $\{f_t(\omega)\}_{t\in[0,T]}$ satisfying
      \begin{flalign*}
      \mathbb{E}\left[\left(\int^T_0f_t(\omega){\rm d}t\right)^p\right]<+\infty,
      \end{flalign*}
      and
      \begin{flalign*}
      \int_0^T\left[u_t(\omega)+v_t^2(\omega)\right]{\rm d}t\leq M,~~{\rm d}P-a.s.,
      \end{flalign*}
      for some constant $M>0$ such that ${\rm d}P\times {\rm d}t -a.e.$, for each $y\in \R$, $z\in \R^d$,
      \begin{flalign*}
      |g(\omega,t,y,z)|\leq f_t(\omega)+u_t(\omega)|y|+v_t(\omega)|z|;
      \end{flalign*}
  \item(H6)~${\rm d}P\times {\rm d}t -a.e.$, $g(\omega,t,\cdot,\cdot):\R\times \R^d\mapsto \R$ is a continuous function.
\end{description}

The following Proposition \ref{pro4i} will play an important role in the proof of Theorem \ref{minimal solution}. Its proof is analogous to Lemma 1 in \citet{Lepeltier97}, so we omit it here.

\begin{pro}\label{pro4i}
Assume that the generator $g$ satisfies assumptions (H5) and (H6). Let $g_n$ be the function defined as follows:
\begin{flalign*}
g_n(\omega,t,y,z):=\inf\limits_{(\bar{y},\bar{z})\in R^{1+d}}\left\{g(\omega,t,\bar{y},\bar{z})+nu_t(\omega)|y-\bar{y}|+nv_t(\omega)|z-\bar{z}|\right\}.
\end{flalign*}
Then the sequence of function $g_n$ is well defined, for each $n\geq 1$, $g_n(\omega,t,y,z)$ is $(\mathcal{F}_t)$-progressively measurable for each $(y,z)\in \R\times \R^d$, and it satisfies, ${\rm d}P\times {\rm d}t -a.e.$,
\begin{description}
  \item(i)~Stochastic linear growth: $\forall~ y,z,~|g_n(\omega,t,y,z)|\leq f_t(\omega)+u_t(\omega)|y|+v_t(\omega)|z|$;
  \item(ii)~Monotonicity in $n$: $\forall~ y,z,~g_n(\omega,t,y,z)$ increases in $n$;
  \item(iii)~Lipschitz condition: $\forall~ y_1,y_2,z_1,z_2$, we have
  \begin{flalign*}
  |g_n(\omega,t,y_1,z_1)-g_n(\omega,t,y_2,z_2)|\leq nu_t(\omega)|y_1-y_2|+nv_t(\omega)|z_1-z_2|;
  \end{flalign*}
  \item(iv)~Convergence: If $(y_n,z_n)\rightarrow(y,z)$, then $g_n(\omega,t,y_n,z_n)\rightarrow g(\omega,t,y,z)$, as $n\rightarrow \infty$.
\end{description}
\end{pro}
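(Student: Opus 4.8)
The plan is to adapt the inf-convolution argument of \citet{Lepeltier97} (their Lemma~1) to the stochastic coefficients $u_t,v_t$ appearing here. As a preliminary step I would record that the integrability requirements in (H5) make $f_t,u_t,v_t<+\infty$, ${\rm d}P\times{\rm d}t$-a.e., and I would reduce to the case $u_t>0$ and $v_t>0$, ${\rm d}P\times{\rm d}t$-a.e.: replacing $u_t$ by $u_t+\rho_t$ and $v_t$ by $(v_t^2+\rho_t)^{1/2}$ for some strictly positive $(\mathcal{F}_t)$-progressively measurable $\rho$ with $\int_0^T\rho_t\,{\rm d}t<+\infty$ only enlarges the linear-growth bound in (H5), so all hypotheses are kept; this non-degeneracy, which is automatic in the constant-coefficient setting of \citet{Lepeltier97}, is the one genuinely new ingredient here and is what makes the approximation recover $g$ in assertion (iv). Call $(\omega,t)$ \emph{admissible} if $g(\omega,t,\cdot,\cdot)$ is continuous, the linear-growth bound of (H5) holds at $(\omega,t)$, and $0<u_t,v_t<+\infty$, $f_t<+\infty$; its complement is ${\rm d}P\times{\rm d}t$-null and there one simply sets $g_n:=0$. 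On an admissible $(\omega,t)$ all the assertions become analytic statements about the real function $g(\omega,t,\cdot,\cdot)$.

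For well-definedness and (i): at an admissible $(\omega,t)$ and any $(\bar y,\bar z)$, the linear growth of $g$, $n\ge 1$, and $nu_t|y-\bar y|-u_t|\bar y|\ge u_t|y-\bar y|-u_t|\bar y|\ge -u_t|y|$ (and its analogue in $z$) give
\[
g(\omega,t,\bar y,\bar z)+nu_t|y-\bar y|+nv_t|z-\bar z|\ \ge\ -f_t-u_t|y|-v_t|z|,
\]
so the defining infimum is finite; taking $(\bar y,\bar z)=(y,z)$ gives $g_n(\omega,t,y,z)\le g(\omega,t,y,z)$, and these two bounds together with $|g|\le f_t+u_t|y|+v_t|z|$ yield (i). Assertion (ii) is immediate since the quantity under the infimum is nondecreasing in $n$. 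For (iii), given $\varepsilon>0$ choose $(\bar y,\bar z)$ that is $\varepsilon$-optimal in $g_n(\omega,t,y_2,z_2)$, insert it into the infimum defining $g_n(\omega,t,y_1,z_1)$, use $|y_1-\bar y|\le|y_1-y_2|+|y_2-\bar y|$ and its analogue in $z$, let $\varepsilon\downarrow0$, and swap the two points. For measurability, at admissible $(\omega,t)$ the map $(\bar y,\bar z)\mapsto g(\omega,t,\bar y,\bar z)+nu_t|y-\bar y|+nv_t|z-\bar z|$ is continuous, so its infimum over $\R^{1+d}$ equals its infimum over the fixed countable set $\mathbb{Q}^{1+d}$; this presents $g_n(\cdot,\cdot,y,z)$ as a countable infimum of $(\mathcal{F}_t)$-progressively measurable processes, hence it is $(\mathcal{F}_t)$-progressively measurable.

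The crux is (iv). First I would prove the pointwise version $g_n(\omega,t,y,z)\uparrow g(\omega,t,y,z)$ at every admissible $(\omega,t)$: by (i)--(ii) the limit $L:=\lim_n g_n(\omega,t,y,z)$ exists with $L\le g(\omega,t,y,z)$, and choosing $(\bar y_n,\bar z_n)$ that is $\tfrac1n$-optimal in the infimum and inserting the lower linear-growth bound for $g(\omega,t,\bar y_n,\bar z_n)$ yields, after rearranging,
\[
(n-1)\big(u_t|y-\bar y_n|+v_t|z-\bar z_n|\big)\ \le\ g(\omega,t,y,z)+f_t+u_t|y|+v_t|z|+\tfrac1n;
\]
since $u_t>0$ and $v_t>0$ this forces $\bar y_n\to y$ and $\bar z_n\to z$, whence $g_n(\omega,t,y,z)\ge g(\omega,t,\bar y_n,\bar z_n)-\tfrac1n\to g(\omega,t,y,z)$ by continuity, and therefore $L=g(\omega,t,y,z)$. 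For a general sequence $(y_n,z_n)\to(y,z)$ I would then sandwich: for each fixed $m$ and all $n\ge m$, (ii) and (i) give $g_m(\omega,t,y_n,z_n)\le g_n(\omega,t,y_n,z_n)\le g(\omega,t,y_n,z_n)$; letting $n\to\infty$, the continuity of $g_m(\omega,t,\cdot,\cdot)$ (Lipschitz by (iii)) and of $g(\omega,t,\cdot,\cdot)$ give
\[
g_m(\omega,t,y,z)\ \le\ \liminf_{n\to\infty}g_n(\omega,t,y_n,z_n)\ \le\ \limsup_{n\to\infty}g_n(\omega,t,y_n,z_n)\ \le\ g(\omega,t,y,z),
\]
and letting $m\to\infty$ together with the pointwise convergence already obtained pins both the liminf and the limsup to $g(\omega,t,y,z)$. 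I expect the genuine difficulty to lie exactly in this last paragraph — extracting and controlling the near-minimizers $(\bar y_n,\bar z_n)$, which is where the non-degeneracy of $u_t$ and $v_t$ is indispensable — everything else being routine bookkeeping.
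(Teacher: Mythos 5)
Your proof is correct and follows essentially the same route as the paper, which omits the argument and simply refers to Lemma~1 of \citet{Lepeltier97}; your write-up is precisely the adaptation of that inf-convolution argument to the stochastic coefficients $u_t,v_t$, with all four assertions handled as in that lemma. Your preliminary reduction to $u_t>0$, $v_t>0$ is a worthwhile (indeed necessary) addition rather than a deviation: at points where $u_t$ or $v_t$ vanishes the infimum erases the dependence of $g_n$ on $y$ or $z$ and assertion (iv) genuinely fails, a degeneracy that cannot occur in the constant-coefficient setting of \citet{Lepeltier97} and that the paper passes over in silence.
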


Now we state the main result of this section \textemdash Theorem \ref{minimal solution}. It improves Theorem 1 in \citet{Fan Jiang and Tian11}, where the $u_t(\omega)$ and $v_t(\omega)$ in (H5) do not depend on $\omega$, and $p=2$, and Theorem 3.3 in \citet{Izumi13}, where the $u_t(\omega)$ and $v_t(\omega)$ need to be bounded processes and $T<+\infty$.

\begin{thm}\label{minimal solution}
Assume that $p>1$, $0\leq T\leq +\infty$ and that the generator $g$ satisfies (H5) and (H6). Then for each $\xi\in L^p(\Omega, \mathcal{F}_T,P;\R)$, BSDE $(\xi,T,g)$ admits a minimal $L^p$ solution $(y_t,z_t)_{t\in [0,T]}$, which means that if $(\bar{y}_t,\bar{z}_t)_{u\in [0,T]}$ is any $L^p$ solution to BSDE $(\xi,T,g)$, then for each $t\in [0,T]$, ${\rm d}P-a.s.$, $y_t\leq \bar{y}_t$.
\end{thm}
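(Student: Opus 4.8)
The plan is to follow the classical Lepeltier--San Martin scheme as adapted to the $L^p$ setting by \citet{Izumi13}, but to prove convergence directly rather than via monotone-convergence-plus-weak-compactness. First I would invoke Proposition \ref{pro4i} to introduce the approximating generators $g_n$, which are Lipschitz in $(y,z)$ with the stochastic coefficients $nu_t$ and $nv_t$ satisfying $\int_0^T[(nu_t)+(nv_t)^2]{\rm d}t\le n^2M$, ${\rm d}P$-a.s.; since $|g_n(\cdot,0,0)|\le f_t$ and $\E[(\int_0^Tf_t{\rm d}t)^p]<+\infty$, assumptions (H1) and (H2) hold for $g_n$, so Theorem \ref{existence and uniqueness} gives a unique $L^p$ solution $(y^n_t,z^n_t)_{t\in[0,T]}$ to BSDE $(\xi,T,g_n)$. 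By the monotonicity (ii) in Proposition \ref{pro4i} and Corollary \ref{corollary} (applied with the generators $g_n\le g_{n+1}$, both of which satisfy (H3) and (H4) with the relevant processes), the sequence $y^n_t$ is nondecreasing in $n$; likewise, comparing $g_n$ with $g$ itself via Theorem \ref{general comparison theorem}, any $L^p$ solution $(\bar y,\bar z)$ of BSDE $(\xi,T,g)$ dominates $(y^n,\bar y^n$) pointwise, i.e. $y^n_t\le\bar y_t$ for all $n$ and $t$. This simultaneously gives monotone convergence $y^n_t\uparrow y_t$ and the a priori bound $\E[\sup_t|y^n_t|^p]\le C$ uniform in $n$, the latter obtained either from the comparison with a solution built from the bound $f_t+u_t|y|+v_t|z|$ or, more self-containedly, directly from Lemma \ref{lem} applied to $(y^n,z^n)$ together with a Gronwall-type argument using $\int_0^T[u_t+v_t^2]{\rm d}t\le M$.

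The heart of the proof is showing that $(y^n,z^n)$ is Cauchy in $S^p\times M^p$. For $n\ge m$, set $\hat y=y^n-y^m$, $\hat z=z^n-z^m$, and $\hat g_s=g_n(s,y^n_s,z^n_s)-g_m(s,y^m_s,z^m_s)$, so that $(\hat y,\hat z)$ solves a BSDE with zero terminal value and driver $\hat g$. Applying (\ref{lem ineq3}) of Lemma \ref{lem} on the stochastic subintervals $[T_{i-1},T_i]$ defined exactly as in the proof of Theorem \ref{existence and uniqueness} (so that $(\int_{T_{i-1}}^{T_i}u_s{\rm d}s)^p+(\int_{T_{i-1}}^{T_i}v_s^2{\rm d}s)^{p/2}$ is as small as we like), I would split $\hat g_s$ into a "Lipschitz part" $g_m(s,y^n_s,z^n_s)-g_m(s,y^m_s,z^m_s)$, controlled by $mu_s|\hat y_s|+mv_s|\hat z_s|$ and absorbed into the left-hand side on short enough intervals, and a "consistency part" $g_n(s,y^n_s,z^n_s)-g_m(s,y^n_s,z^n_s)$. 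For the consistency part one uses $0\le g_n(s,y^n_s,z^n_s)-g_m(s,y^n_s,z^n_s)\le g(s,y^n_s,z^n_s)-g_m(s,y^n_s,z^n_s)$ together with $|g(s,y^n_s,z^n_s)|\le f_s+u_s|y^n_s|+v_s|z^n_s|$ and the convergence property (iv), and then dominated convergence (using the uniform $L^p$ bounds on $y^n$ and the $M^p$ bounds on $z^n$, which follow from (\ref{lem ineq2}) and the a priori estimate) to show $\E[(\int_0^T|g_n(s,y^n_s,z^n_s)-g_m(s,y^n_s,z^n_s)|{\rm d}s)^p]\to0$ as $m,n\to\infty$. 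Chaining the estimate across the finitely many subintervals $[T_{N-1},T],\dots,[0,T_1]$ then yields $\|y^n-y^m\|_{S^p}^p+\|z^n-z^m\|_{M^p}^p\to0$, so $(y^n,z^n)\to(y,z)$ in $S^p\times M^p$; passing to the limit in BSDE $(\xi,T,g_n)$, using (iv) and dominated convergence once more for the ${\rm d}s$-integral, shows $(y,z)$ solves BSDE $(\xi,T,g)$. Minimality is immediate: $y^n_t\le\bar y_t$ for every $n$ forces $y_t\le\bar y_t$.

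The main obstacle I anticipate is the interaction between the stopping-time subdivision and the passage to the limit: the intervals $[T_{i-1},T_i]$ are chosen using $M$ (the bound on $\int_0^T[u_s+v_s^2]{\rm d}s$), which does not depend on $n$, so the number $N$ of pieces is fixed and the contraction-type constant on each piece is $n$-independent for the consistency part — but one must be careful that the "Lipschitz part" carries the factor $m$, so the absorption on $[T_{i-1},T_i]$ needs the bound $(\int_{T_{i-1}}^{T_i}(mu_s){\rm d}s)^p+(\int_{T_{i-1}}^{T_i}(mv_s)^2{\rm d}s)^{p/2}\le\frac{1}{2c_p}$, i.e. the subdivision must be refined to scale like $m$. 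This is the subtlety that forces the "direct" Cauchy argument to be run with care: one fixes $m$, refines the partition to $N_m\sim m$ pieces, runs the chained estimate to get $\|(y^n-y^m)\|_{S^p\times M^p}\le C_m\,\varepsilon_{m,n}$ with $\varepsilon_{m,n}\to0$ as $n\to\infty$, and then argues that the full sequence is Cauchy by a standard $\varepsilon/3$ argument combined with the monotonicity of $y^n$ (which already guarantees $S^p$-convergence of $y^n$ by monotone + dominated convergence, leaving only the $M^p$-convergence of $z^n$ to be extracted, and for that the chained Lemma \ref{lem} estimate with the consistency part tending to zero suffices). Handling this bookkeeping cleanly — rather than the BSDE estimates themselves, which are routine given Lemma \ref{lem} — is where the real work lies.
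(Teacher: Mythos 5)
Your overall skeleton (inf-convolution approximants $g_n$, comparison to get monotonicity and the bound $|y^n_t|\le|y^1_t|+|Y_t|$, a Cauchy argument in $S^p\times M^p$, passage to the limit, minimality from $y^n_t\le\bar y_t$) matches the paper, but your central Cauchy step is a genuinely different argument and it has a real gap. You propose to estimate $y^n-y^m$ via (\ref{lem ineq3}) on an $m$-dependent stopping-time subdivision, absorbing the ``Lipschitz part'' $g_m(s,y^n_s,z^n_s)-g_m(s,y^m_s,z^m_s)$ with the constants $mu_s,\ mv_s$ and treating the ``consistency part'' $g_n(s,y^n_s,z^n_s)-g_m(s,y^n_s,z^n_s)$ by dominated convergence. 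Two things break. First, the consistency part: to send $\mathbb{E}[(\int_0^T(g_n-g_m)(s,y^n_s,z^n_s){\rm d}s)^p]$ to zero by dominated convergence you need both a pointwise limit (via (iv) of Proposition \ref{pro4i}, which requires $z^n_s\to z_s$ ${\rm d}P\times{\rm d}t$-a.e.\ --- precisely what the Cauchy argument is supposed to deliver) and an $n$-uniform dominating function controlling $v_s|z^n_s|$ (i.e.\ $\sup_n|z^n_\cdot|\in M^p$, which the paper only obtains \emph{after} $M^p$-convergence, via the $2^{-n}$ subsequence trick). As written the step is circular. Second, the bookkeeping you flag does not close: the subdivision must satisfy $(\int_{T_{i-1}}^{T_i}mu_s{\rm d}s)^p+(\int_{T_{i-1}}^{T_i}(mv_s)^2{\rm d}s)^{p/2}\le 1/(2c_p)$, forcing $N_m\sim m^2$ pieces, and chaining the terminal values across the pieces produces a constant of order $(2\bar C_p)^{N_m}$, while the consistency term has no quantitative decay in $m$ to compensate. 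Your fallback --- that $S^p$-convergence of $y^n$ already follows from ``monotone plus dominated convergence'' --- is also unjustified: pointwise monotone convergence at each fixed $t$ with an $L^p$ dominating envelope gives $\mathbb{E}[|y^n_t-y_t|^p]\to0$ for each $t$, not $\mathbb{E}[\sup_t|y^n_t-y_t|^p]\to0$ (Dini is unavailable: $T$ may be infinite and the limit is not known to be continuous a priori).

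The paper's proof avoids all of this by never invoking the Lipschitz constants of $g_n$ and never subdividing. It applies (\ref{lem ineq1}) and (\ref{lem ineq2}) to $y^m-y^n$, in which the driver difference enters only multiplied by $|y^m_s-y^n_s|^{p-1}$ (resp.\ $|y^m_s-y^n_s|$); the driver difference is then bounded by the $n$-independent linear-growth envelope $2f_s+u_s(|y^m_s|+|y^n_s|)+v_s(|z^m_s|+|z^n_s|)$ from (i) of Proposition \ref{pro4i}, and after H\"older the prefactor $|y^m_s-y^n_s|^{p-1}\to0$ (from the monotone pointwise convergence of $y^n$, dominated by $2^{p-1}G^{p-1}$) kills every term. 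That is the idea your proposal is missing: let the factor $|y^m-y^n|$ supplied by Lemma \ref{lem} do the work, rather than trying to make the driver difference itself small.
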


\begin{proof}
Let $g_n$ be defined as in Proposition \ref{pro4i}. In view of (i) of Proposition \ref{pro4i}, for each $n\geq 1$, we have
\begin{flalign*}
\mathbb{E}\left[\left(\int_0^T|g_n(s,0,0)|{\rm d}s\right)^p\right]\leq \mathbb{E}\left[\left(\int_0^Tf_s{\rm d}s\right)^p\right]< +\infty.
\end{flalign*}
In view of (iii) of Proposition \ref{pro4i} and (H5), it follows from Theorem \ref{existence and uniqueness}, that for each $n\geq 1$, BSDE $(\xi,T,g_n)$ and BSDE $(\xi,T,h)$ admit unique $L^p$ solutions $(y^n_t,z^n_t)_{t\in [0,T]}$ and $(Y_t,Z_t)_{t\in [0,T]}$, respectively, where $h(\omega,t,y,z):=f_t(\omega)+u_t(\omega)|y|+v_t(\omega)|z|$ for each $(\omega,t,y,z)$. And in view of (ii) of Proposition \ref{pro4i}, Corollary \ref{corollary} yields that for each $n\geq 1$ and $t\in [0,T]$, $y_t^1(\omega)\leq y_t^n(\omega)\leq y_t^{n+1}(\omega)\leq Y_t(\omega)$, ${\rm d}P-a.s.$. Thus, there must exist a $(\mathcal{F}_t)$-progressively measurable  process $(y_t)_{t\in [0,T]}$ satisfying that for each $t\in [0,T]$,
\begin{flalign*}
\lim_{n\rightarrow +\infty}y_t^n(\omega)=y_t(\omega), ~~{\rm d}P-a.s.,
\end{flalign*}
and for each $n\geq 1$,
\begin{flalign}\label{40}
|y_t^n(\omega)|\leq |y_t^1(\omega)|+|Y_t(\omega)|,~~{\rm d}P-a.s..
\end{flalign}
Now, let $G(\omega)=\sup\limits_{t\in[0,T]}(|y_t^1(\omega)|+|Y_t(\omega)|)$, we have
\begin{flalign}\label{41}
\mathbb{E}\left[\sup\limits_{t\in [0,T]}|y_t|^p\right]\leq \mathbb{E}\left[G^p\right]< +\infty.
\end{flalign}

Furthermore, it follows form (\ref{lem ineq2}) of Lemma \ref{lem} together with (\ref{40}) and (\ref{41}) that there exists a constant $C_p>0$ depending only on $p$ such that for each $n\geq 1$,
\begin{flalign}\label{400}
\mathbb{E}\left[\left(\int_0^T|z_s^n|^2{\rm d}s\right)^{\frac{p}{2}}\right]\leq C_p\mathbb{E}\left[|\xi|^p+\left(\int_0^T\left(|y_s^n||g_n(s,y_s^n,z_s^n)|\right){\rm d}s\right)^{\frac{p}{2}}\right]+C_p\mathbb{E}\left[G^p\right].
\end{flalign}
On the other hand, in view of (i) of Proposition \ref{pro4i} and by inequalities $(a+b+c)^p\leq 3^p(a^p+b^p+c^p)$, $ab\leq \varepsilon a^2+b^2/\varepsilon$ and H\"{o}lder's inequality, we can deduce that for each $n\geq 1$ and $\varepsilon>0$,
\begin{flalign}\label{401}
&\mathbb{E}\left[\left(\int^T_0\left(|y_s^n||g_n(s,y_s^n,z_s^n)|\right){\rm d}s\right)^{\frac{p}{2}}\right] \nonumber\\
&\quad \leq 3^{\frac{p}{2}}\mathbb{E}\left[\left(\int^T_0\left(|y_s^n|f_s\right){\rm d}s\right)^{\frac{p}{2}}+\left(\int^T_0\left(|y_s^n|^2u_s\right){\rm d}s\right)^{\frac{p}{2}}+\left(\int^T_0\left(|y_s^n||z_s^n|v_s\right){\rm d}s\right)^{\frac{p}{2}}\right] \nonumber\\
&\quad \leq 3^{\frac{p}{2}}\left\{\mathbb{E}\left[\sup\limits_{s\in [0,T]}|y_s^n|^p\right]+\frac{1}{2}\mathbb{E}\left[\left(\int_0^Tf_s{\rm d}s\right)^p\right]
+\frac{1}{2}M^p+\left(\frac{1}{\varepsilon}\right)^{\frac{p}{2}}M^{\frac{p}{2}}\mathbb{E}\left[\sup\limits_{s\in [0,T]}|y_s^n|^p\right]
\right\}\nonumber\\
&\qquad +(3\varepsilon)^{\frac{p}{2}}\mathbb{E}\left[\left(\int_0^T|z_s^n|^2{\rm d}s\right)^{\frac{p}{2}}\right],
\end{flalign}
Now choosing $\varepsilon > 0$ such that $C_p(3\varepsilon)^{\frac{p}{2}}=\frac{1}{2}$, from (\ref{400})-(\ref{401}) together with (\ref{40}) and (\ref{41}), we can conclude that
\begin{flalign}\label{42}
\sup\limits_{n\geq 1}\left\|z_\cdot^n\right\|^p_{M^p}=\sup\limits_{n\geq 1}\mathbb{E}\left[\left(\int_0^T|z_s^n|^2{\rm d}s\right)^{\frac{p}{2}}\right]<+\infty.
\end{flalign}

In the sequel, we will show the $(y_t^n)_{t\in [0,T]}$ is a Cauchy sequence in space $S^p(0,T;\R)$. Note that $(y_\cdot^m-y_\cdot^n,z_\cdot^m-z_\cdot^n)$ satisfies the following equation:
\begin{flalign*}
y_t^m-y_t^n=\int_t^T\left[g_m(s,y_s^m,z_s^m)-g_n(s,y_s^n,z_s^n)\right]{\rm d}s-\int_t^T(z_s^m-z_s^n)\cdot {\rm d}B_s,~~t\in [0,T],
\end{flalign*}
for each $m,n\geq 1$. In view of (H5) and (\ref{lem ineq1}) of Lemma \ref{lem}, we obtain that there exists a constant $c_p$ such that
\begin{flalign}\label{cauchy y}
\left\|y_\cdot^m-y_\cdot^n\right\|^p_{S^p}&\leq 2c_p\mathbb{E}\left[\int^T_0\left[|y_s^m-y_s^n|^{p-1}f_s\right]{\rm d}s\right]+c_p\mathbb{E}\left[\int_0^T\left[|y_s^m-y_s^n|^{p-1}u_s(|y_s^m|+|y_s^n|)\right]{\rm d}s\right]\nonumber\\
&\quad +c_p\mathbb{E}\left[\int_0^T\left[|y_s^m-y_s^n|^{p-1}v_s(|z_s^m|+|z_s^n|)\right]{\rm d}s\right].
\end{flalign}
We can prove that the three terms of right-hand side of the previous inequality tend to zero as $m,n\rightarrow \infty$ respectively. Indeed, by (H5), H\"{o}lder's inequality and (\ref{41}), note that
\begin{flalign*}
\mathbb{E}\left[\int_0^T\left(G^{p-1}f_s\right){\rm d}s\right]=\mathbb{E}\left[G^{p-1}\int_0^Tf_s{\rm d}s\right]\leq \left(\mathbb{E}\left[G^p\right]\right)^{\frac{p-1}{p}}\left(\mathbb{E}\left[\left(\int_0^Tf_s{\rm d}s\right)^p\right]\right)^{\frac{1}{p}}< +\infty,
\end{flalign*}
\begin{flalign*}
\mathbb{E}\left[\left(\int_0^T\left(G^{p-1}u_s\right){\rm d}s\right)^{\frac{p}{p-1}}\right]=\mathbb{E}\left[G^p\left(\int_0^Tu_s{\rm d}s\right)^{\frac{p}{p-1}}\right]\leq \mathbb{E}\left[G^p\right]M^{\frac{p}{p-1}}< +\infty,
\end{flalign*}
\begin{flalign*}
\mathbb{E}\left[\left(\int_0^T\left(G^{2p-2}v_s^2\right){\rm d}s\right)^{\frac{p}{2p-2}}\right]=\mathbb{E}\left[G^p\left(\int_0^Tv^2_s{\rm d}s\right)^{\frac{p}{2p-2}}\right]\leq \mathbb{E}\left[G^p\right]M^{\frac{p}{2p-2}}< +\infty.
\end{flalign*}
Since for each $m,n\geq 1$ and $s\in [0,T]$, ${\rm d}P-a.s., |y_s^m(\omega)-y_s^n(\omega)|^{p-1}\leq 2^{p-1}G^{p-1}(\omega)$, and ${\rm d}P \times {\rm d}t-a.e.,\ y_\cdot^n\rightarrow y_\cdot$ as $n\rightarrow +\infty$, by Lebesgue's dominated convergence theorem we deduce that, as $m,n\rightarrow \infty$,
\begin{flalign}\label{43}
&\mathbb{E}\left[\int_0^T\left(|y_s^m-y_s^n|^{p-1}f_s\right){\rm d}s\right]\rightarrow 0,~~~~\mathbb{E}\left[\left(\int_0^T\left(|y_ s^m-y_s^n|^{p-1}u_s\right){\rm d}s\right)^{\frac{p}{p-1}}\right]\rightarrow 0,\nonumber \\
&\mathbb{E}\left[\left(\int_0^T\left(|y_s^m-y_s^n|^{2p-2}v_s^2\right){\rm d}s\right)^{\frac{p}{2p-2}}\right]\rightarrow 0.
\end{flalign}
Thus, in view of (\ref{40}), (\ref{41}), (\ref{42}) and (\ref{43}), it follows from H\"{o}lder's inequality that, as $m,n\rightarrow \infty$,
\begin{flalign}\label{cauchy y1}
&\mathbb{E}\left[\int_0^T\left[|y_s^m-y_s^n|^{p-1}u_s(|y_s^m|+|y_s^n|)\right]{\rm d}s\right]\nonumber\\
&\quad\leq 2\left(\mathbb{E}\left[G^p\right]\right)^{\frac{1}{p}}\left(\mathbb{E}\left[\left(\int_0^T\left(|y_s^m-y_s^n|^{p-1}u_s\right){\rm d}s\right)^{\frac{p}{p-1}}\right]\right)^{\frac{p-1}{p}}\rightarrow 0
\end{flalign}
and
\begin{flalign}\label{cauchy y2}
&\mathbb{E}\left[\int_0^T\left[|y_s^m-y_s^n|^{p-1}v_s(|z_s^m|+|z_s^n|)\right]{\rm d}s\right]\nonumber\\
&\quad\leq\mathbb{E}\left[\left(\int_0^T\left(|y_s^m-y_s^n|^{2p-2}v_s^2\right){\rm d}s\right)^{\frac{1}{2}}\cdot
 \left(\int_0^T(|z_s^m|+|z_s^n|)^2{\rm d}s\right)^{\frac{1}{2}}\right]\nonumber\\
&\quad\leq\mathbb{E}\left[\left(\int_0^T\left(|y_s^m-y_s^n|^{2p-2}v_s^2\right){\rm d}s\right)^{\frac{p}{2p-2}}\right]^{\frac{p-1}{p}}\cdot
 \mathbb{E}\left[\left(\int_0^T(|z_s^m|+|z_s^n|)^2{\rm d}s\right)^{\frac{p}{2}}\right]^{\frac{1}{p}}\rightarrow 0.
\end{flalign}
Hence, combining (\ref{cauchy y})-(\ref{cauchy y2}), we obtain that
\begin{flalign}\label{cauchy y lim}
\lim_{n\rightarrow \infty}\|y_\cdot^n-y_\cdot\|_{S^p}=0.
\end{flalign}

Furthermore, we prove that $(z_t^n)_{t\in [0,T]}$ is a Cauchy sequence in space $M^p(0,T;\R^d)$. In fact, by (\ref{lem ineq2}) of Lemma \ref{lem}, we know the existence of a constant $\bar{C}_p$ depending only on $p$ such that for each $m,n\geq 1$,
\begin{flalign}\label{cauchy z}
\left\|z_\cdot^m-z_\cdot^n\right\|_{M^p}^p&\leq \bar{C}_p\mathbb{E}\left[\left(\int_0^T
\left[|y_s^m-y_s^n||g_m(s,y_s^m,z_s^m)-g_n(s,y_s^n,z_s^n)|\right]{\rm d}s\right)^{\frac{p}{2}}\right]\nonumber\\
&\quad +\bar{C}_p\left\|y_\cdot^m-y_\cdot^n\right\|_{S^p}^p.
\end{flalign}
On the other hand, by (H5), inequality $(a+b+c)^p\leq 3^p(a^p+b^p+c^p)$ and H\"{o}lder's inequality, we deduce that
\begin{flalign}\label{cauchy z1}
&\mathbb{E}\left[\left(\int_0^T\left[|y_s^m-y_s^n||g_m(s,y_s^m,z_s^m)-g_n(s,y_s^n,z_s^n)|\right]{\rm d}s\right)^{\frac{p}{2}}\right]\nonumber\\
&\quad \leq \mathbb{E}\left[\left(\int_0^T|y_s^m-y_s^n|\left(2f_s+u_s(|y_s^m|+|y_s^n|)+v_s(|z_s^m|+|z_s^n|)\right){\rm d}s\right)^{\frac{p}{2}}\right]\nonumber\\
&\quad \leq 3^{\frac{p}{2}}\left\|y_\cdot^m-y_\cdot^n
\right\|_{S^p}^{\frac{p}{2}}\cdot
\left\{2^{\frac{p}{2}}\mathbb{E}
\left[\left(\int_0^Tf_s{\rm d}s\right)^p\right]^{\frac{1}{2}}+
2^{\frac{p}{2}}\mathbb{E}
\left[G^p\right]^{\frac{1}{2}}\cdot M^{\frac{p}{2}}\right\}\nonumber\\
&\quad \quad+3^{\frac{p}{2}}\left\|y_\cdot^m-y_\cdot^n
\right\|_{S^p}^{\frac{p}{2}}\cdot\mathbb{E}
\left[\left(\int_0^T(|z_s^m|+|z_s^n|)^2{\rm d}s\right)^{\frac{p}{2}}\right]\cdot M^{\frac{p}{4}}.
\end{flalign}
Thus, combining (\ref{42}), (\ref{cauchy y lim}), (\ref{cauchy z}) and (\ref{cauchy z1}), we can conclude that there exists a process $z_.\in M^p(0,T;\R^d)$ such that
\begin{flalign}\label{cauchy z lim}
\lim_{n\rightarrow \infty}\|z_\cdot^n-z_\cdot\|_{M^p}=0.
\end{flalign}

Now, we can choose a subsequence of $\{z_\cdot^n\}$, still denote by itself, such that $\|z_\cdot^n-z_\cdot\|_{M^p}\leq \frac{1}{2^n}$ for each $n\geq 1$. Then
\begin{flalign}{\label{z norm bounded}}
\left\|\sup_n|z^n_\cdot|\right\|_{M^p}&\leq \left\|\sup_n|z^n_\cdot-z_\cdot|\right\|_{M^p}+ \left\||z_\cdot|\right\|_{M^p}\leq\left\|
\sum^{+\infty}_{n=1}|z^n_\cdot-z_\cdot|\right\|
+\left\||z_\cdot|\right\|_{M^p}\nonumber\\
&\leq\sum^{+\infty}_{n=1}\left\||z^n_\cdot-z_\cdot|
\right\|_{M^p}+\left\||z_\cdot|\right\|_{M^p}\leq 1+\left\||z_\cdot|\right\|_{M^p}< +\infty.
\end{flalign}
Denote $H_t(\omega):=f_t(\omega)+u_t(\omega)G(\omega)
+v_t(\omega)\sup\limits_{n}|z_t^n(\omega)|$. By (H5), (\ref{40}), (\ref{41}) and (i) of Proposition \ref{pro4i}, we know that for each $n\geq 1$, ${\rm d}P\times {\rm d}t-a.e.$,
\begin{flalign}\label{46}
|g_n(t,y_t^n,z_t^n)-g(t,y,z)|\leq 2H_t.
\end{flalign}
And by H\"{o}lder's inequality together with (\ref{41}) and (\ref{z norm bounded}), we have
\begin{flalign}\label{47}
\mathbb{E}\left[\left(\int_0^T|H_s|{\rm d}s\right)^p\right]&\leq 3^p\mathbb{E}\left[\left(\int_0^Tf_s{\rm d}s\right)^p\right]+3^p\mathbb{E}\left[G^p\right]M^p \nonumber \\
&\quad+3^p\mathbb{E}\left[\left(\int_0^T\sup_{n\geq 1}|z_s^n|^2{\rm d}s\right)^{\frac{p}{2}}\right]M^{\frac{p}{2}}<+\infty.
\end{flalign}
On the other hand, in view of (\ref{cauchy y lim}), (\ref{cauchy z lim}) and (iv) of Proposition \ref{pro4i}, we can assume that, choosing a subsequence if necessary, as $n\rightarrow \infty$,
\begin{flalign}\label{45}
g_n(t,y^n_t,z^n_t)\rightarrow g(t,y_t,z_t), ~~{\rm d}P\times {\rm d}t -a.e..
\end{flalign}
Thus, by (\ref{46})-(\ref{45}), it follows from Lesbesgue's dominated convergence theorem that
\begin{flalign*}
\lim_{n\rightarrow \infty}\mathbb{E}\left[\left(\int_0^T\left|g_n(s,y_s^n,z_s^n)-g(s,y_s,z_s)\right|{\rm d}s\right)^p\right]=0.
\end{flalign*}
Finally, taking limits in BSDE $(\xi,T,g_n)$ yields that $(y_t,z_t)_{t\in[0,T]}$ is a $L^p$ solution of BSDE $(\xi,T,g)$.\vspace{0.1cm}

It remains to prove that $(y_.,z_.)$ is the minimal $L^p$ solution of BSDE $(\xi,T,g)$, let $(\hat{y}_t,\hat{z}_t)_{t\in [0,T]}$ be any solution of BSDE $(\xi,T,g)$. In view of (ii) and (iii) of Proposition \ref{pro4i}, by Corollary \ref{corollary}, we obtain that ${\rm d}P-a.s.,~y_t^n\leq \hat{y}_t$ for each $t\in [0,T]$ and $n\geq 1$, from which and by letting $n\rightarrow \infty$ we get that for each $t\in [0,T]$, ${\rm d}P-a.s.$, $y_t \leq \hat{y}_t$. The proof of Theorem \ref{minimal solution} is then complete.
\end{proof}

\begin{rem}
In the same way as in Theorem \ref{minimal solution}, we can prove the existence of the maximal $L^p~(p>1)$ solution of BSDE (\ref{bsde}) under assumptions (H5) and (H6).
\end{rem}

By Theorem \ref{general comparison theorem} and the proof of Theorem \ref{minimal solution}, we can easily get the following comparison theorem on the minimal (resp. maximal) $L^p$ solutions of BSDEs.

\begin{thm}\label{theorem 5.2}
Assume that $p>1$, $0\leq T\leq +\infty$, $\xi,\xi'\in L^p(\Omega,\F_T, P; \R)$, and both generators $g$ and $g'$ satisfy (H5) and (H6). Let $(y_\cdot,z_\cdot)$ and $(y'_\cdot,z'_\cdot)$ be, respectively, the minimal (resp. maximal) $L^p$ solution of BSDE $(\xi,T,g)$ and BSDE $(\xi',T,g')$ (recall Theorem 5.1 and Remark 5.1). If ${\rm d}P-a.s., \xi\leq \xi'$ and ${\rm d}P\times{\rm d}t-a.e., g(\omega,t,y,z)\leq g'(\omega,t,y,z)$ for each $(y,z)\in \R\times \R^d$, then for each $t\in [0,T]$,
$${\rm d}P-a.s.,\ \ \  y_t\leq y'_t.$$
\end{thm}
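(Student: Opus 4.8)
The plan is to realize the minimal $L^p$ solutions of BSDE $(\xi,T,g)$ and BSDE $(\xi',T,g')$ as monotone limits of solutions of Lipschitz-generator BSDEs, and then push the comparison through the limit. Concretely, following the construction in the proof of Theorem \ref{minimal solution}, let $g_n$ and $g'_n$ be the inf-convolution approximations of $g$ and $g'$ given by Proposition \ref{pro4i} (using the same processes $u_t,v_t$ — note both $g$ and $g'$ satisfy (H5), but we may take a common dominating pair $u_t,v_t$ and common $M$ by replacing them with the maxima, which preserves (H5) for both and the bound (\ref{beasic assumption})). Then $g_n$, $g'_n$ are Lipschitz in the sense of (iii) of Proposition \ref{pro4i}, hence by Theorem \ref{existence and uniqueness} the BSDEs $(\xi,T,g_n)$ and $(\xi',T,g'_n)$ have unique $L^p$ solutions $(y^n,z^n)$ and $(y'^n,z'^n)$. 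The proof of Theorem \ref{minimal solution} shows $y^n_t \uparrow y_t$ and $y'^n_t \uparrow y'_t$ pointwise ${\rm d}P$-a.s.\ for each $t$, where $(y,z)$ and $(y',z')$ are the respective minimal $L^p$ solutions.

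Next I would establish the comparison at the approximating level. Fix $n\geq 1$. The generator $g'_n$ is Lipschitz in $(y,z)$; in particular it satisfies (H3) and (H4) (with $\phi(x)=x$, $b=0$, and the relevant processes $n u_t$, $n v_t$, which still satisfy the integrability bounds of (H3)--(H4) with constant $nM$ or, after rescaling, one checks Corollary \ref{corollary} applies — alternatively just invoke Corollary \ref{corollary} directly since $g'_n$ is Lipschitz, hence monotone in $y$ and uniformly continuous in $z$ with the stated non-uniform bounds). From the monotonicity $g\leq g'$ pointwise in $(y,z)$, the definition of the inf-convolution gives ${\rm d}P\times{\rm d}t$-a.e.\ $g_n(\omega,t,y,z)\leq g'_n(\omega,t,y,z)$ for all $(y,z)$: indeed $g_n(\omega,t,y,z)=\inf_{(\bar y,\bar z)}\{g(\omega,t,\bar y,\bar z)+nu_t|y-\bar y|+nv_t|z-\bar z|\}\leq \inf_{(\bar y,\bar z)}\{g'(\omega,t,\bar y,\bar z)+nu_t|y-\bar y|+nv_t|z-\bar z|\}=g'_n(\omega,t,y,z)$. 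Combined with $\xi\leq\xi'$ ${\rm d}P$-a.s., Corollary \ref{corollary} applied to the pair $(g_n,g'_n)$ (with the Lipschitz generator $g'_n$ satisfying (H3)--(H4)) yields that for each $t\in[0,T]$, ${\rm d}P$-a.s., $y^n_t\leq y'^n_t$.

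Finally, I would pass to the limit: letting $n\to\infty$ in $y^n_t\leq y'^n_t$ and using the pointwise convergences $y^n_t\to y_t$ and $y'^n_t\to y'_t$ (${\rm d}P$-a.s.), we get $y_t\leq y'_t$ ${\rm d}P$-a.s.\ for each $t\in[0,T]$, which is the claim. The maximal-solution case is symmetric: one uses the sup-convolution approximations from Remark \ref{rem} (the remark following Theorem \ref{minimal solution}), the reversed monotone limits $y^n_t\downarrow y_t$, the inequality $g_n\leq g'_n$ again holding for the sup-convolutions by the same monotonicity argument, and Corollary \ref{corollary}. The main obstacle, I expect, is a bookkeeping one rather than a conceptual one: making sure a single common pair of coefficient processes $(u_t,v_t)$ and constant $M$ can be used so that the inf-convolutions $g_n$ and $g'_n$ are built with the same Lipschitz constants $(nu_t,nv_t)$, which is what makes the inequality $g_n\leq g'_n$ transparent; and checking that the Lipschitz generators $g'_n$ (resp.\ $g_n$) do fall under the hypotheses of Corollary \ref{corollary} — but this is immediate since Lipschitz continuity with coefficients $nu_t,nv_t$ implies both the one-sided condition (H3) with process $nu_t$ (still satisfying $\int_0^T nu_t\,{\rm d}t\leq nM$) and the uniform-continuity condition (H4) with process $nv_t$ and $\phi(x)=x$, $b=0$.
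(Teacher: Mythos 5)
Your proposal is correct and follows exactly the route the paper intends: the paper states Theorem \ref{theorem 5.2} as an immediate consequence of Theorem \ref{general comparison theorem} (via Corollary \ref{corollary}) and the monotone-approximation construction in the proof of Theorem \ref{minimal solution}, which is precisely what you carry out. Your bookkeeping points --- taking a common dominating pair $(u_t,v_t)$ so that $g\leq g'$ transfers to $g_n\leq g'_n$ through the inf-convolution, and checking that the Lipschitz approximants satisfy (H3)--(H4) so Corollary \ref{corollary} applies --- are exactly the details the paper leaves implicit, and they are handled correctly.
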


By Theorem \ref{minimal solution} and Theorem \ref{general comparison theorem}, the following Theorem \ref{theorem 5.3} follows immediately, which generalizes Theorem \ref{existence and uniqueness} in Section 3.

\begin{thm}\label{theorem 5.3}
Assume that $p>1$, $0\leq T\leq +\infty$, and the generator $g$ satisfies assumption (H2) and the following assumption (H7):
\begin{description}
\item(H7)~$g$ is Lipschitz continuous in $y$ and uniformly continuous in $z$, non-uniformly with respect to both $t$ and $\omega$ , i.e., there exist a linear-growth function $\phi(\cdot)\in \bf{S}$ and two $(\mathcal{F}_t)$-progressively measurable nonnegative processes $\{u_t(\omega)\}_{t\in [0,T]}$ and $\{v_t(\omega)\}_{t\in [0,T]}$ satisfying
       \begin{flalign*}
        \int_0^T\left[u_t(\omega)+v_t^2(\omega)\right]{\rm d}t\leq M,~~{\rm d}P-a.s.
       \end{flalign*}
       for some constant $M>0$ such that ${\rm d}P\times {\rm d}t -a.e.$, for each $y_1,y_2\in \R$, $z_1,z_2\in \R^d$,
    \begin{flalign*}
      \left|g(\omega,t,y_1,z_1)-g(\omega,t,y_2,z_2)\right|\leq u_t(\omega)|y_1-y_2|+v_t(\omega)\phi(|z_1-z_2|).
      \end{flalign*}
\end{description}
Then for each $\xi\in L^p(\Omega,\mathcal{F}_T,P;\R)$, BSDE $(\xi,T,g)$ admits a unique $L^p$ solution.
\end{thm}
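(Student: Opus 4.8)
The plan is to obtain Theorem~\ref{theorem 5.3} from results already established: existence will follow from Theorem~\ref{minimal solution}, and uniqueness from Corollary~\ref{corollary} (equivalently Theorem~\ref{general comparison theorem}). The whole argument reduces to checking that the generator $g$, which by hypothesis satisfies (H2) and (H7), also satisfies (H5) and (H6) on the one hand, and (H3) and (H4) on the other. In particular no new probabilistic estimate is needed; it is a matter of manipulating the assumptions.

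For existence, I would first observe that (H6) is immediate: for a.e. $(\omega,t)$ one has $u_t,v_t<+\infty$ (since $\int_0^T(u_t+v_t^2)\,{\rm d}t\le M$), and by (H7) together with the continuity of $\phi$ and $\phi(0)=0$, the map $g(\omega,t,\cdot,\cdot)$ is continuous. To get (H5), write, for each $(y,z)$, $|g(\omega,t,y,z)|\le |g(\omega,t,0,0)|+u_t|y|+v_t\phi(|z|)\le f_t+u_t|y|+(av_t)|z|$ with $f_t:=|g(\omega,t,0,0)|+bv_t$. Then $\int_0^T[u_t+(av_t)^2]\,{\rm d}t\le (1+a^2)M$ is bounded ${\rm d}P$-a.s., and $\E[(\int_0^Tf_t\,{\rm d}t)^p]<+\infty$ because $\E[(\int_0^T|g(\omega,t,0,0)|\,{\rm d}t)^p]<+\infty$ by (H2) while $\int_0^Tbv_t\,{\rm d}t\le bM$ (trivially when $b=0$, and by the standing assumption from (H4) when $b\neq 0$). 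Hence $g$ satisfies (H5) with the triple $(u_t,av_t,f_t)$, and Theorem~\ref{minimal solution} yields an $L^p$ solution of BSDE $(\xi,T,g)$.

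For uniqueness, $g$ satisfies (H3) with the process $u_t$ of (H7), since $\sgn(y_1-y_2)(g(\omega,t,y_1,z)-g(\omega,t,y_2,z))\le u_t|y_1-y_2|$ and $\int_0^T u_t\,{\rm d}t\le M$, and $g$ satisfies (H4) with the process $v_t$ and modulus $\phi$ of (H7) (with the same standing assumption on $\int_0^T v_t\,{\rm d}t$ when $b\neq 0$). Given two $L^p$ solutions $(y,z)$ and $(y',z')$ of BSDE $(\xi,T,g)$, Corollary~\ref{corollary} applied with $g'=g$ and $\xi'=\xi$ gives $y_t\le y'_t$ for every $t$, ${\rm d}P$-a.s.; exchanging the roles of the two solutions gives the reverse inequality, so $y\equiv y'$ in $S^p$ by path continuity. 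Subtracting the two equations then forces $\int_0^\cdot(z_s-z'_s)\cdot{\rm d}B_s\equiv 0$, whence $z=z'$ in $M^p$; alternatively, $(0,z-z')$ solves a BSDE with null generator and null terminal value, so (\ref{lem ineq3}) of Lemma~\ref{lem} gives $\|z-z'\|_{M^p}=0$. This completes the uniqueness.

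The proof is thus mostly bookkeeping, and the one step that deserves genuine attention is the verification of (H5), i.e.\ the integrability of $f_t=|g(\cdot,0,0)|+bv_t$: this is exactly where the linear-growth bound $\phi(x)\le ax+b$ on the modulus, and in the case $b\neq 0$ the auxiliary control $\int_0^T v_t\,{\rm d}t\le M$, are actually used — in particular to keep the argument valid for an infinite horizon $T=+\infty$, where $\int_0^T v_t\,{\rm d}t$ is not in general dominated by $\int_0^T v_t^2\,{\rm d}t$.
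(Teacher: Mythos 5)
Your proposal is correct and takes essentially the same route as the paper, which obtains Theorem \ref{theorem 5.3} directly from Theorem \ref{minimal solution} (existence) and Theorem \ref{general comparison theorem}/Corollary \ref{corollary} (uniqueness); your verification that (H2) and (H7) imply (H5)--(H6) and (H3)--(H4), including the role of the standing assumption $\int_0^T v_t\,{\rm d}t\le M$ when $b\neq 0$ in controlling $f_t=|g(\cdot,0,0)|+bv_t$, is exactly the bookkeeping the paper leaves implicit. The only slip is the parenthetical alternative for $z=z'$: the difference equation has generator $g(s,y_s,z_s)-g(s,y_s,z'_s)$, which is not null, so (\ref{lem ineq3}) does not apply as you state it (though (\ref{lem ineq2}) with $\hat{Y}\equiv 0$ would), but your primary argument that a continuous local martingale of finite variation vanishes is correct.
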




\end{document}